%
%
\documentclass[letter,final,twosided]{amsart}
\usepackage{amsfonts}
\usepackage{amscd}
\usepackage{amssymb}
\usepackage{amsmath}
\usepackage{amstext}
\usepackage{amsmath}
\usepackage{paralist}
\usepackage{graphicx}
\usepackage[english]{babel}
\usepackage{hyperref}

\newtheorem{theorem}{Theorem}[section]

\theoremstyle{definition}
\newtheorem{definition}[theorem]{Definition}
\newtheorem{remark}[theorem]{Remark}

\title[Discrete second order constrained Lagrangian systems] {Discrete
  second order constrained \\Lagrangian systems: first results}

\author[N. Borda, J. Fern\'andez and S. Grillo]{}

\subjclass[2010]{Primary: 70F25, 70G75, 70H45; Secondary: 70G45.}

\keywords{Geometric mechanics, discrete mechanical systems,
  nonholonomic mechanics, second order constraints.}

\email{bordan@ib.cnea.gov.ar}
\email{jfernand@ib.edu.ar}
\email{sergiog@cab.cnea.gov.ar}

\thanks{This research was partially supported by grants from the
  Universidad Nacional de Cuyo (UNCu), the Comisi{\'o}n Nacional de
  Energ{\'\i}a At{\'o}mica (CNEA) and the Universidad Nacional de La Plata
  (UNLP). N. B. was partially supported by fellowships from the
  Fundaci{\'o}n YPF and the Consejo Nacional de Investigaciones
  Cient{\'\i}ficas y T{\'e}cnicas (CONICET)}


\begin{document}

\bibliographystyle{amsplain}


\maketitle

\centerline{\scshape Nicol{\'a}s Borda$^{1,2,3}$, Javier Fern{\'a}ndez$^1$ and
  Sergio Grillo$^{1,3}$}

\medskip {\footnotesize 

  \centerline{$^1$Instituto Balseiro, Universidad Nacional de Cuyo --
    C.N.E.A.}  

  \centerline{ Av. Bustillo 9500, San Carlos de
    Bariloche, R8402AGP, Rep\'ublica Argentina} }

\medskip

{\footnotesize
  \centerline{$^2$Departamento de Matem{\'a}tica, Facultad de Ciencias Exactas,
    Universidad Nacional de La Plata} 
  \centerline{50 y 115, La Plata, Buenos Aires, 1900, Rep\'ublica
    Argentina} 
}

\medskip 

{\footnotesize \centerline{$^3$Consejo Nacional de Investigaciones
    Cient{\'\i}ficas y T{\'e}cnicas, Rep\'ublica Argentina} }


\begin{abstract}
  We briefly review the notion of second order constrained
  (continuous) system (SOCS) and then propose a discrete time
  counterpart of it, which we naturally call discrete second order
  constrained system (DSOCS). To illustrate and test numerically our
  model, we construct certain integrators that simulate the evolution
  of two mechanical systems: a particle moving in the plane with
  prescribed signed curvature, and the inertia wheel pendulum with a
  Lyapunov constraint. In addition, we prove a local existence and
  uniqueness result for trajectories of DSOCSs. As a first comparison
  of the underlying geometric structures, we study the symplectic
  behavior of both SOCSs and DSOCSs.
\end{abstract}


\section{Introduction}
\label{Sect:intro}

Discrete Variational Mechanics originated in the 60's, motivated by
the construction of variational numerical integrators for the
equations of motion of (continuous) mechanical systems. Since then,
significant progress has been made in the study of discrete time
versions of unconstrained systems and systems with holonomic
constraints. The advantage offered by the resulting integrators,
compared to other numerical methods, is that they take into account
the underlying geometric structure present in the mechanical problem
and, therefore, can be designed to respect, in some way, the momentum,
energy, or symplectic structure (see \cite{ar:marsden_west-discrete_mechanics_and_variational_integrators} and the
multiple references therein). The discrete dynamics in the more
general case of nonholonomic constraints\footnote{Classical references
  for (continuous) nonholonomic systems
  are~\cite{bo:dobronravov-the_fundamentals_of_mechanics_of_nonholonomic_systems,bo:neimark_fufaev-dynamics_of_nonholonomic_systems}. More recent ones
  are~\cite{bo:bloch-nonholonomic_mechanics_and_control,bo:cortes-non_holonomic}.}
was introduced more recently, in 2001, by J.  Cort\'es and
S. Mart\'{\i}nez in~\cite{ar:cortes_martinez-non_holonomic_integrators}. Nonholonomic integrators
have become of interest mainly because of their good performance in
numerical experiments (see, for
instance,~\cite{ar:mclachlan_perlmutter-integrators_for_nonholonomic_mechanical_systems,ar:campos_cendra_diaz_martin-discrete_lagrange_dalembert_poincare_equations_for_euler's_disk}). Still,
they are less understood theoretically than the preceding ones.

Even broader than the continuous nonholonomic case, we have mechanical
systems with higher order constraints, which have been studied
in~\cite{ar:cendra_grillo-lagrangian_systems_with_higher_order_constraints,ar:cendra_ibort_de_leon_martin-a_generalization_of_chetaevs_principle_for_a_class_of_higher_order_nonholonomic_constraints,ar:grillo-higher_order_constrained_hamiltonian_systems}. They are
Lagrangian systems with constraints involving higher order derivatives
of the position. They have been considered for describing some
simplified models of rolling viscoelastic bodies and systems with
friction~\cite{ar:cendra_ibort_de_leon_martin-a_generalization_of_chetaevs_principle_for_a_class_of_higher_order_nonholonomic_constraints,ar:cendra_grillo-lagrangian_systems_with_higher_order_constraints}. They have also
appeared in applications to the control of underactuated mechanical
systems~\cite{ar:grillo-higher_order_constrained_hamiltonian_systems,ar:grillo_maciel_perez-closed_loop_and_constrained_mechanical_systems,ar:grillo_marsden_nair-lyapunov_constraints_and_global_asymptotic_stabilization} (see
Section~\ref{Sect:example2}). Such applications consist in finding
constraints that ensure the desired behavior of the system under
consideration and, then, taking the related constraint force as the
control law (see also \cite{ar:marle-kinematic_and_geometric_constraints_servomechanism_and_control_of_mechanical_systems,ar:cendra_grillo-generalized_nonholonomic_mechanics_servomechanisms_and_related_brackets,ar:shiriaev_perram_canudasdewit-constructive_tool_for_orbital_stabilization_of_underactuated_nonlinear_systems}).  It is a
general fact that every control signal can be obtained by this
procedure using second order constraints~\cite{ar:grillo_maciel_perez-closed_loop_and_constrained_mechanical_systems}.
For example, for asymptotic stabilization of underactuated systems
Lyapunov constraints can be used (see~\cite{ar:grillo_marsden_nair-lyapunov_constraints_and_global_asymptotic_stabilization} and
Section~\ref{Sect:example2}).

It is worth remarking that the constraints
appearing in most of the interesting applications, like those
previously mentioned, involve, at most, second order derivatives,
\emph{i.e.} positions, velocities and accelerations. For this reason,
we will only consider systems with (at most) second order constraints
in this work.

The practical difficulty of solving the equations of motion of
(continuous) mechanical systems with nonholonomic constraints leads to
the numerical integrators mentioned above. The aim of this paper is to
propose a discrete time counterpart of the (continuous) second order
constrained Lagrangian systems. We study some basic properties of
those discrete time systems and use them to construct numerical
integrators for the continuous ones.

The plan for the paper is as follows. In Section~\ref{Sect:SOCS} we
review the notion of (continuous) second order constrained Lagrangian
system. In addition, we prove a result characterizing the evolution
with the flow of the natural Lagrangian symplectic structure of such a
system. In
Section~\ref{sec:discrete_second_order_constrained_Lagrangian_systems}
we introduce the discrete second order constrained Lagrangian systems,
their dynamics and their equations of motion. In
Sections~\ref{Sect:example1} and~\ref{Sect:example2} we apply the
discrete formalism just developed to two examples. There we find
numerical integrators and test their quality by comparing against
either the exact solution or a well known integrator of the
corresponding continuous system. On the other hand, in
Section~\ref{Sect:discflow} we prove some results about the dynamics
of the discrete systems: the existence of a well defined local flow
and a discrete analogue of the evolution of the symplectic form
studied in Section~\ref{Sect:SOCS}. Last, in
Section~\ref{Sect:future}, we comment on some directions of future
work.

\smallskip

\emph{Notation:} throughout the paper $\tau_{X}$ is the projection of
the tangent bundle $TX$ onto $X$.


\section{Second order constrained Lagrangian systems}
\label{Sect:SOCS}

In this section we review the notion of higher order constrained
system such as it appears in~\cite{ar:cendra_grillo-lagrangian_systems_with_higher_order_constraints,th:grillo-sistemas_noholonomos_generalizados}. In
particular, we shall only consider first order Lagrangian functions
(this partially excludes the systems studied in \cite{ar:krupkova-higher_order_mechanical_systems_with_constraints}). The
focus of our exposition is on systems with constraints of order at
most $2$ for the reason explained in Section~\ref{Sect:intro}. Recall
that $T^{(2)}Q$ denotes the \emph{second order tangent bundle} of the
manifold $Q$
(see~\cite{ar:crampin_sarlet_cantrijn-higher_order_differential_equations_and_higher_order_lagrangian_mechanics,bo:deleon_rodrigues-generalized_classical_mechanics_and_field_theory}).

\begin{definition} [SOCS] 
  \label{socs} 
  A \emph{second order constrained Lagrangian system} is a quadruple
  $(Q,L,C_{K},C_{V})$ where
  \begin{enumerate}
  \item $Q$ is a finite dimensional differentiable manifold, the
    \emph{configuration space},
  \item $L:TQ\rightarrow\mathbb{R}$ is a smooth function on the
    tangent bundle of $Q$, the \emph{Lagrangian},
  \item $C_{K}\subset T^{(2)}Q$ is a submanifold, the \emph{kinematic
      constraints}, and
  \item $C_{V}\subset T^{(2)}Q\times_{Q}TQ$ (where $\times_{Q}$
    denotes the fiber product on $Q$) is such that for every $q\in Q$
    and $\eta\in T_{q}^{(2)}Q$, the set
    $C_{V}|_{\eta}:=C_{V}\cap(\{\eta\}\times T_{q}Q)$, naturally
    identified with a subset of $T_{q}Q$, is either empty or a vector
    subspace, the \emph{virtual displacements} or \emph{variational
      constraints}.
  \end{enumerate}
\end{definition}

For every system of this type, the \emph{action functional} is defined
by $S(\gamma):=\int_{t_{0}}^{t_{1}}L(\gamma^{\prime}(t))\ dt$, where
$\gamma:[t_{0},t_{1}]\rightarrow Q$ is a smooth curve in $Q$ and
$\gamma^{\prime}(t)\in TQ$ is its velocity (in what follows, $\gamma
^{(2)}:[t_{0},t_{1}]\rightarrow T^{(2)}Q$ will denote its
$2$-lift). An \emph{infinitesimal variation} of $\gamma$ is a smooth
curve $\delta \gamma:[t_{0},t_{1}]\rightarrow TQ$ such that
$\tau_{Q}(\delta\gamma (t))=\gamma(t)$ $\forall t$, and it is said to
have \emph{vanishing end points} if $\delta\gamma(t_{0})=0$ and
$\delta\gamma(t_{1})=0$. The dynamics of a SOCS is determined by the
following Principle.

\begin{definition} [Lagrange--d'Alembert's Principle for SOCSs] A
  smooth curve~$\gamma :[t_{0},t_{1}]\rightarrow Q$ is a
  \emph{trajectory} of the SOCS $(Q,L,C_{K},C_{V})$ if
  \begin{enumerate}
  \item it satisfies the kinematic constraints: $\gamma^{(2)}(t)\in
    C_{K}$ $\forall t\in\lbrack t_{0},t_{1}]$; and
  \item it is a critical point of $S$ for the \emph{admissible
      variations}: $dS(\gamma)(\delta\gamma)=0$ $\forall\delta\gamma$
    with vanishing end points and such that $\delta\gamma(t)\in
    C_{V}|_{\gamma^{(2)}(t)}$ $\forall t\in\lbrack t_{0},t_{1}]$.
  \end{enumerate}
\end{definition}

\begin{remark}
  \label{rem:nonholonomic_and_socs}
  All holonomic and nonholonomic systems, \emph{i.e.}  constrained
  systems that satisfy d'Alembert's Principle, can be seen as SOCSs.
  Indeed, if we have a system $(Q,L)$ with constraints given by a
  distribution $\mathcal{D}\subset TQ$ (with $\mathcal{D}$ integrable
  in the holonomic case), defining
  $C_{K}:=(\tau^{(1,2)})^{-1}(\mathcal{D})$ and $C_{V}:=T^{(2)}%
  Q\times_{Q}\mathcal{D}$, where $\tau^{(1,2)}:T^{(2)}Q\rightarrow TQ$
  is the canonical projection, then $(Q,L,C_{K},C_{V})$ is a SOCS
  whose dynamics recovers the dynamics of the original system. With
  the same idea, generalized nonholonomic systems
  (see~\cite{ar:marle-kinematic_and_geometric_constraints_servomechanism_and_control_of_mechanical_systems,th:grillo-sistemas_noholonomos_generalizados,ar:cendra_grillo-generalized_nonholonomic_mechanics_servomechanisms_and_related_brackets}) can also be seen as SOCSs.

  Systems with (at most) second order constraints satisfying the
  natural generalization of Chetaev's Principle~\cite{ar:chetaev-on_the_gauss_principle}, as
  those appearing in \cite{ar:krupkova-higher_order_mechanical_systems_with_constraints} (with first order Lagrangians, as
  in~\cite{ar:swaczyna-mechanical_systems_with_nonholonomic_constraints_of_the_second_order}), define a particular subclass of SOCSs.

  On the other hand, second order vakonomic systems, as considered
  in~\cite{ar:benito_martindediego-hidden_simplecticity_in_hamilton_s_principle_algorithm},
  are not SOCSs because they are purely variational ---that is, their
  trajectories are critical points of the action restricted to the
  admissible paths--- and they allow Lagrangians that depend on higher
  order derivatives of the path.
\end{remark}

When $C_{V}|_{\eta}$ is nonempty for all $\eta\in C_{K}$, and $C_{V}$
is a submanifold, Theorems 17 and 19 in~\cite{ar:cendra_grillo-lagrangian_systems_with_higher_order_constraints} prove
that $\gamma$ is a trajectory of the system if and only if, $\forall
t\in [t_{0},t_{1}]$,
\begin{equation}
  \label{eq:socsmotion_kin_and_var}
  \gamma^{(2)}(t) \in C_{K} \quad\text{ and }
  \quad D_{EL}L(\gamma^{(2)}(t)) \in F_{V}|_{\gamma^{(2)}(t)},
\end{equation}
where $D_{EL}L:T^{(2)}Q\rightarrow T^{\ast}Q$ is the well known
Euler--Lagrange map (see~\cite{bo:cendra_marsden_ratiu-lagrangian_reduction_by_stages}, Thm. 2.2.3) and
$F_{V}|_{\eta}:=\left( C_{V}|_{\eta}\right) ^{\circ}$ for all $\eta\in
T^{(2)}Q$ is the \emph{space of constraint forces}. Notice that for
nonholonomic systems, given $q\in Q$ and $\eta\in T_{q}^{(2)}Q$, we
have that $F_{V}|_{\eta}= \mathcal{D}_{q}^{\circ}$ (see
Remark~\ref{rem:nonholonomic_and_socs}), that is, the constraint
forces vanish on the allowed velocities, which is the content of
d'Alembert's Principle.

Under some conditions, it is possible to define the \emph{flow}\footnote{In
this section we shall ignore issues related to  global versus local flows. For
SOCSs, there are certain conditions of  existence and uniqueness of
trajectories when $C_{V} = (\tau^{(1,2)}  \times id_{TQ})^{-1}(C_{V}^{\prime
})$ for some $C_{V}^{\prime}\subset TQ\times_{Q} TQ$  (see \cite{ar:grillo-higher_order_constrained_hamiltonian_systems},
Sect. IV).} $F_{L}:TQ\times\mathbb{R}\rightarrow TQ$ of the system. We are
interested in studying the symplecticity of the map $F^{t}_{L}:TQ\rightarrow
TQ$ corresponding to flowing for a fixed time $t$. Recall that the
\emph{Legendre transform} of $L$ is $\mathcal{F}L:TQ\rightarrow T^{\ast}Q$
defined by
\begin{equation*}
  \mathcal{F}L(v_{q})(w_{q}) := \frac{d}{dz}\bigg|_{z=0}(L(v_{q}+z\ w_{q})),
  \quad\forall v_{q},w_{q}\in T_{q}Q.  
\end{equation*}
Next, define the \emph{Lagrangian }$1$\emph{-form} $\theta_{L}\in\Omega
^{1}(TQ)$ by
\begin{equation*}
  \theta_{L}(v_{q})(V_{v_{q}}) := \mathcal{F}L(v_{q})(D\tau_{Q}(v_{q})(V_{v_{q}})),
  \quad\forall V_{v_{q}}\in T_{v_{q}}(TQ),  
\end{equation*}
and the \emph{Lagrangian }$2$\emph{-form} $\Omega_{L}\in\Omega^{2}(TQ)$ by
\begin{equation*}
  \Omega_{L} := -d\theta_{L},  
\end{equation*}
which is symplectic for regular Lagrangians. It has been shown
in~\cite{ar:cortes_martinez-non_holonomic_integrators} (Sect. 5.1) and
in~\cite{ar:deleon_martindediego_santameriamerino-geometric_integrators_and_nonholonomic_mechanics}
(Sect. II) that, for nonholonomic systems, the symplectic form
$\Omega_{L}$ is preserved by the corresponding flow $F_{L}$ up to an
additive exact form. Our next result extends this property to SOCSs
and, in particular, to generalized nonholonomic systems.

\begin{theorem} [Evolution of $\Omega_{L}$] Let $(Q,L,C_{K},C_{V})$ be
  a SOCS with flow $F_{L}:TQ\times\mathbb{R}\rightarrow TQ$ and $t$ be
  any fixed time. Then,
  \begin{equation*}
    (F_{L}^{t})^{\ast}(\Omega_{L}) = \Omega_{L} + d\nu,    
  \end{equation*}
  for $\nu\in\Omega^{1}(TQ)$ defined by
  \begin{equation*}
    \nu(q,\dot{q})(\delta q,\delta\dot{q}) := \int_{0}^{t}D_{EL}L(\gamma
    ^{(2)}(s))(\delta q(s))\ ds, \quad\ \forall(\delta q,\delta\dot{q})\in
    T_{(q,\dot{q})}(TQ),    
  \end{equation*}
  and where $\gamma$ is the trajectory with initial conditions
  $(q,\dot{q})$ and, for $s\in[0,t]$,
  \begin{equation*}
    \delta q(s):=D(\tau_{Q}\circ F_{L}^{s})(q,\dot{q})(\delta q,\delta\dot{q}) 
    \in T_{\gamma(s)}Q.    
  \end{equation*}
\end{theorem}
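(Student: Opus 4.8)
The plan is to exhibit $(F_L^t)^*\Omega_L - \Omega_L$ as minus the exterior derivative of the boundary contribution in the first variation of the action, and to recognize that boundary contribution, up to an exact piece, as $\theta_L$ evaluated along the flow. Concretely, wherever $F_L^s$ is defined for all $s\in[0,t]$, I would introduce the smooth function $\hat S:TQ\to\mathbb{R}$ by $\hat S(q,\dot q):=\int_0^t L(\gamma'(s))\,ds$, where $\gamma$ is the trajectory with initial condition $(q,\dot q)$, equivalently $\gamma'(s)=F_L^s(q,\dot q)$. Since the flow is assumed smooth, $\hat S$ is smooth, so $d(d\hat S)=0$; the entire argument then reduces to computing $d\hat S$ and applying $d$.

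The core computation is the first variation formula for $S$. Given $(\delta q,\delta\dot q)\in T_{(q,\dot q)}(TQ)$, I would pick a curve $\epsilon\mapsto(q_\epsilon,\dot q_\epsilon)$ through $(q,\dot q)$ with velocity $(\delta q,\delta\dot q)$ and let $\gamma_\epsilon$ be the associated trajectories. Setting $\delta\gamma(s):=\frac{d}{d\epsilon}\big|_0\gamma_\epsilon(s)=D(\tau_Q\circ F_L^s)(q,\dot q)(\delta q,\delta\dot q)=\delta q(s)$, differentiation under the integral sign together with the integration by parts defining the Euler--Lagrange map (the characterizing property of $D_{EL}L$ and $\theta_L$) yields
\[
  d\hat S(q,\dot q)(\delta q,\delta\dot q)
  = \int_0^t D_{EL}L(\gamma^{(2)}(s))(\delta q(s))\,ds
  + \theta_L(\gamma'(s))\big(\delta\gamma'(s)\big)\Big|_{s=0}^{s=t},
\]
where $\delta\gamma'(s)=\frac{d}{d\epsilon}\big|_0\gamma_\epsilon'(s)=DF_L^s(q,\dot q)(\delta q,\delta\dot q)$. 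It is essential that this identity holds for the flow-induced variation $\delta\gamma$ with \emph{no} admissibility requirement: unlike in the derivation of the equations of motion, here $\delta\gamma$ need not lie in $C_V$, so the bulk integral does not vanish; it is exactly the constraint-force term recorded by $\nu$.

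I would then identify the three pieces invariantly. The integral is $\nu(q,\dot q)(\delta q,\delta\dot q)$ by definition. For the boundary term, the definition $\theta_L(v_q)(V_{v_q})=\mathcal{F}L(v_q)(D\tau_Q(v_q)(V_{v_q}))$ and the relation $D\tau_Q(\delta\gamma'(s))=\delta\gamma(s)$ give, at $s=0$ (where $\delta\gamma'(0)=(\delta q,\delta\dot q)$), the value $\theta_L(q,\dot q)(\delta q,\delta\dot q)$, and at $s=t$ the value $\theta_L(F_L^t(q,\dot q))(DF_L^t(q,\dot q)(\delta q,\delta\dot q))=\big((F_L^t)^*\theta_L\big)(q,\dot q)(\delta q,\delta\dot q)$. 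Hence
\[
  d\hat S = \nu + (F_L^t)^*\theta_L - \theta_L .
\]
Applying $d$, using that $d\hat S$ is closed, $\Omega_L=-d\theta_L$, and that $d$ commutes with pullback, gives $0=d\nu-(F_L^t)^*\Omega_L+\Omega_L$, which is the assertion.

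The main obstacle I expect is the rigorous justification of the first variation formula in this coordinate-free constrained setting: one must verify that $\gamma_\epsilon$ may be chosen to depend smoothly on $\epsilon$ (this is precisely where smoothness of $F_L$ enters) and that the boundary contribution assembles into $\theta_L$ evaluated on the genuine tangent vector $\delta\gamma'(s)\in T(TQ)$ rather than on some arbitrary lift of $\delta\gamma(s)$. The latter is harmless because $\theta_L(v_q)(V_{v_q})$ depends on $V_{v_q}$ only through $D\tau_Q(V_{v_q})$, so every admissible lift produces the same value; once this is observed, the remaining manipulations with $d$ are purely formal.
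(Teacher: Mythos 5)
Your proposal is correct and follows essentially the same route as the paper: both define the restricted action $\hat S$ on $TQ$, apply the first variation formula to the flow-induced (non-admissible) variation so that the bulk term gives $\nu$ and the boundary term gives $(F_L^t)^*\theta_L-\theta_L$, and then take $d$ using $d^2\hat S=0$. Your remark that $\theta_L$ depends on the lift only through $D\tau_Q$ is exactly how the paper handles the arbitrary second component of the boundary tangent vector.
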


\begin{proof}
  The proof is based on~\cite{ar:marsden_west-discrete_mechanics_and_variational_integrators} (Sect. 1.2.3). Given a
  smooth curve $\gamma :[0,t]\rightarrow Q$ and any variation $\delta
  \gamma $ of $\gamma$ (not necessarily with vanishing end points),
  \begin{equation*}
    dS(\gamma )(\delta \gamma ) =
    \int_{0}^{t}D_{EL}L(\gamma ^{(2)}(s))(\delta \gamma (s))\ ds +
    \theta _{L}(\gamma ^{\prime }(s))(\delta \gamma (s),\ast)|_{0}^{t},
  \end{equation*}
  where $\ast $ is arbitrary but such that $(\delta \gamma (s),\ast
  )\in T_{\gamma ^{\prime }(s)}(TQ)$.  We define the restricted action
  functional $\hat{S}:TQ \rightarrow \mathbb{R}$ by
  \begin{equation*}
    \hat{S}(q,\dot{q}) := S(\hat{\gamma}),
  \end{equation*}
  where $\hat{\gamma}$ is the trajectory of the system with initial
  conditions $(q,\dot{q})$.  For all $(q,\dot{q}) \in TQ$, and all
  $(\delta q,\delta \dot{q}) \in T_{(q,\dot{q})}(TQ)$, we define a
  smooth curve in $T(TQ)$ by $(\delta q(s),\delta
  \dot{q}(s)):=D(F_{L}^{s})(q,\dot{q})(\delta q,\delta \dot{q})$,
  whose first component is an infinitesimal variation $\delta
  \hat{\gamma}$ of $\hat{\gamma}$. We compute
  \begin{eqnarray*}
    d\hat{S}(q,\dot{q})(\delta q,\delta \dot{q}) &=&dS(
    \hat{\gamma})(\delta \hat{\gamma}) \\
    &=&\int_{0}^{t}D_{EL}L(\hat{\gamma}^{(2)}(s))(\delta \hat{\gamma}(s))\
    ds+\theta _{L}(\hat{\gamma}(s))(\delta q(s),\delta \dot{q}(s))|_{0}^{t},
  \end{eqnarray*}
  where we have chosen $\ast $ to be $\delta \dot{q}(s)$
  conveniently. Rewriting the second term in the last equality as
  \begin{equation*}
    ((F_{L}^{t})^{\ast }(\theta _{L})-\theta _{L})(q,\dot{q})
    (\delta q,\delta \dot{q}),
  \end{equation*}
  we find that the $1$-form $\nu $ of the statement is $d\hat{S} -
  ((F_{L}^{t})^{\ast }(\theta _{L})-\theta _{L})$, which is well
  defined on $TQ$. Finally,
  \begin{equation*}
    d\nu =d\left( d\hat{S} - ((F_{L}^{t})^{\ast }(\theta _{L})-\theta _{L})\right)
    =d^{2}\hat{S}-((F_{L}^{t})^{\ast }(d\theta _{L})-d\theta_{L})
    =(F_{L}^{t})^{\ast }(\Omega _{L})-\Omega _{L}.
  \end{equation*}
\end{proof}

\begin{remark}
  The flow $F_{L}$ is a symplectomorphism if $d\nu=0$. When a SOCS is
  unconstrained, which, in the context of
  Remark~\ref{rem:nonholonomic_and_socs}, means that $\mathcal{D}=TQ$,
  we have that $D_{EL}L(\gamma^{(2)}(s))=0$ in the definition of
  $\nu$. Hence, in this case, $F_{L}$ is a symplectomorphism.

  In the holonomic case, \emph{i.e.} $\mathcal{D}$ is an integrable
  distribution, if $\Sigma$ is an integral submanifold of
  $\mathcal{D}$, the flow $F_{L}$ preserves $\Sigma$ and is a
  symplectomorphism with respect to the restriction of $\Omega_{L}$ to
  it. Indeed, when $(\delta q,\delta\dot{q})\in T(T\Sigma)$, we have
  that $\delta q(s)$ remains in $T\Sigma$, so that the term
  $D_{EL}L(\gamma^{(2)}(s))(\delta q(s))$ in the definition of $\nu$
  vanishes.
\end{remark}


\section{Discrete second order constrained Lagrangian systems}
\label{sec:discrete_second_order_constrained_Lagrangian_systems}

Just as SOCSs are an extension of the notion of nonholonomic system, in this
section we introduce a discrete time counterpart of SOCSs that is an extension
of the notion of discrete nonholonomic system introduced
in~\cite{ar:cortes_martinez-non_holonomic_integrators}. Later, in Section~\ref{Sect:discflow}, we study the
existence and uniqueness of trajectories and the symplectic behavior of the
discrete time evolution.

\emph{Notation:} $p_{i,j,...}^{m}$ is the projection on the $i$-th, $j$-th,
and so on, variables of $Q^{m}$ onto $Q$.

\begin{definition}[DSOCS]
  \label{DSOCS}  
  A \emph{discrete second order constrained Lagrangian system} is a
  quadruple $(Q,L_{d},D_{K},D_{V})$ where
  \begin{enumerate}
  \item $Q$ is as in Definition~\ref{socs}, 
  \item $L_{d}:Q\times Q\rightarrow\mathbb{R}$ is a smooth function,
    the \emph{discrete Lagrangian},
  \item $D_{K}\subset Q\times Q\times Q$ is a submanifold, the
    \emph{discrete kinematic constraints}, and
  \item $D_{V}\subset\left( p_{2}^{3}\right) ^{\ast}(TQ)$ (where
    $\left( p_{2}^{3}\right) ^{\ast}(TQ)$ is the pullback bundle under
    $p_{2}^{3}$) is such that for every
    $(q,q^{\prime},q^{\prime\prime})\in Q^{3}$ the subset
    $D_{V}|_{(q,q^{\prime},q^{\prime\prime})}:=D_{V}\cap(\{(q,q^{\prime}%
    ,q^{\prime\prime})\}\times T_{q^{\prime}}Q)$, naturally identified
    with a subset of $T_{q^{\prime}}Q$, is a vector subspace, the
    \emph{discrete variational constraints}.
  \end{enumerate}
\end{definition}

The \emph{discrete action functional} is defined by $S_{d}(q_{\cdot})
:= \sum_{k=0}^{N-1}L_{d}(q_{k},q_{k+1})$ where $q_{\cdot}:\{0,\ldots
,N\}\rightarrow Q$ is a discrete path in $Q$. An \emph{infinitesimal
  variation} of $q_{\cdot}$ consists of a map $\delta
q_{\cdot}:\{0,\ldots ,N\}\rightarrow TQ$ such that $\delta q_{k}\in
T_{q_{k}}Q$ $\forall k$, and it is said to have vanishing end points
if $\delta q_{0}=0$ and $\delta q_{N}=0$.  The following Principle
determines the dynamics of DSOCSs.

\begin{definition}[Discrete Lagrange--d'Alembert Principle for DSOCSs]
  \label{DSOCSdyn}  
  A discrete path $q_{\cdot}:\{0,\ldots,N\}\rightarrow Q$, with
  $N\geq2$, is a \emph{trajectory} of the DSOCS
  $(Q,L_{d},D_{K},D_{V})$ if
  \begin{enumerate}
  \item it satisfies the discrete kinematic constraints:
    \begin{equation*}
      (q_{k-1},q_{k},q_{k+1})\in D_{K}\quad\forall k\in\{1,\ldots,N-1\}, 
      \text{ and}%
    \end{equation*}
  \item it is a critical point of $S_{d}$ for the admissible
    variations: $dS_{d}(q_{\cdot})(\delta q_{\cdot})=0,$
    $\forall\delta q_{\cdot}$ with vanishing end points and such that
    \begin{equation*}
      \delta q_{k}\in D_{V}|_{(q_{k-1},q_{k},q_{k+1})}\quad\forall k\in
      \{1,\ldots,N-1\}.      
    \end{equation*}
  \end{enumerate}
\end{definition}

Let $X$ be a manifold and $X^{m}$ its $m$-th Cartesian product. When
$F:X^{m}\rightarrow\mathbb{R}^{n}$ is a smooth map, its derivative $DF$ is, in
a natural way, a differential form on $X^{m}$ with values in $\mathbb{R}^{n}$.
On the other hand, if $i_{j}:(p^{m}_{j})^{*}(TX)\rightarrow T(X^{m})$ is the
inclusion
\begin{equation*}
  i_{j}(\delta x_{j}) := (0,\ldots,0,\underbrace{\delta x_{j}}_{j}, 0,\ldots,0),
\end{equation*}
we define
\begin{equation} \label{eq:D_j}
  D_{j} F:= i_{j}^{*}(DF) = DF\circ i_{j}.
\end{equation}

When $q_{\cdot}$ is a trajectory of a DSOCS, it follows from the arbitrariness
of the admissible variations that, $\forall k\in\{1,...,N-1\}$,
\begin{equation*}
  D_{1}L_{d}(q_{k},q_{k+1})+D_{2}L_{d}(q_{k-1},q_{k}) \in\left(  
    D_{V}|_{(q_{k-1},q_{k},q_{k+1})}\right)  ^{\circ}.  
\end{equation*}
Inspired by~\cite{ar:mclachlan_perlmutter-integrators_for_nonholonomic_mechanical_systems} (Prop. 3), we define the
section $\beta$ of $D_{V}^{\ast}$ by
\begin{equation}\label{beta}
  \beta(q_{k-1},q_{k},q_{k+1}) := i_{(q_{k-1},q_{k},q_{k+1})}^{t}(\mathcal{F
  }^{+}L_{d}(q_{k-1},q_{k})-\mathcal{F}^{-}L_{d}(q_{k},q_{k+1})),
\end{equation}
where the \emph{discrete Legendre transforms} $\mathcal{F}^{-}L_{d}$
and $\mathcal{F}^{+}L_{d}:Q\times Q\rightarrow T^{\ast}Q$ are such
that $\mathcal{F}^{-}L_{d}(q,q^{\prime}) :=
(q,-D_{1}L_{d}(q,q^{\prime}))$ and $\mathcal{F}^{+}L_{d}(q,q^{\prime})
:= (q^{\prime},D_{2}L_{d}(q,q^{\prime}))$ for all $(q,q^{\prime})\in
Q\times Q$, and where $i_{\cdot} : D_{V}|_{\cdot}
\hookrightarrow\left( \left( p_{2}^{3}\right) ^{\ast}(TQ)\right)
|_{\cdot}$ is the inclusion and $i_{\cdot}^{t}$ is the transpose
map. The following result is straightforward.

\begin{theorem}
  A discrete path $q_{\cdot}:\{0,\ldots,N\}\rightarrow Q$, with
  $N\geq2$, is a trajectory of the DSOCS $(Q,L_{d},D_{K},D_{V})$ if
  and only if, $\forall k\in\{1,\ldots,N-1\}$,
  \begin{equation} \label{eq:motionkin_and_beta}
    (q_{k-1},q_{k},q_{k+1}) \in D_{K} \quad\text{ and
    }\quad\beta(q_{k-1},q_{k},q_{k+1})=0.
  \end{equation}
\end{theorem}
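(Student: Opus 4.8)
The plan is to reduce the two defining conditions of Definition~\ref{DSOCSdyn} to the two conditions in~\eqref{eq:motionkin_and_beta}. The first condition, membership in $D_{K}$, is literally identical in the two formulations, so the entire content lies in showing that the critical-point condition is equivalent to $\beta(q_{k-1},q_{k},q_{k+1})=0$ for all $k\in\{1,\ldots,N-1\}$. First I would expand the differential of the discrete action by the chain rule. Since $S_{d}(q_{\cdot})=\sum_{k=0}^{N-1}L_{d}(q_{k},q_{k+1})$, the notation~\eqref{eq:D_j} gives
\[
  dS_{d}(q_{\cdot})(\delta q_{\cdot})=\sum_{k=0}^{N-1}\left(D_{1}L_{d}(q_{k},q_{k+1})(\delta q_{k})+D_{2}L_{d}(q_{k},q_{k+1})(\delta q_{k+1})\right).
\]
A discrete summation by parts (reindexing the second sum by $k\mapsto k-1$) collects the coefficient of each $\delta q_{k}$; the boundary contributions involve $\delta q_{0}$ and $\delta q_{N}$, which vanish because $\delta q_{\cdot}$ has vanishing end points. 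This yields
\[
  dS_{d}(q_{\cdot})(\delta q_{\cdot})=\sum_{k=1}^{N-1}\left(D_{1}L_{d}(q_{k},q_{k+1})+D_{2}L_{d}(q_{k-1},q_{k})\right)(\delta q_{k}).
\]

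Next I would exploit the arbitrariness of the admissible variations. For each fixed $k\in\{1,\ldots,N-1\}$ and each $v\in D_{V}|_{(q_{k-1},q_{k},q_{k+1})}$, the variation equal to $v$ at node $k$ and vanishing at every other node is admissible, and distinct nodes contribute independently to the sum because the constraint subspace at node $k$ depends only on the triple $(q_{k-1},q_{k},q_{k+1})$. Hence the critical-point condition $dS_{d}(q_{\cdot})(\delta q_{\cdot})=0$ for all admissible $\delta q_{\cdot}$ is equivalent, for every such $k$, to the vanishing of $\left(D_{1}L_{d}(q_{k},q_{k+1})+D_{2}L_{d}(q_{k-1},q_{k})\right)(v)$ for all $v\in D_{V}|_{(q_{k-1},q_{k},q_{k+1})}$, that is, to
\[
  D_{1}L_{d}(q_{k},q_{k+1})+D_{2}L_{d}(q_{k-1},q_{k})\in\left(D_{V}|_{(q_{k-1},q_{k},q_{k+1})}\right)^{\circ},
\]
which is exactly the annihilator condition stated immediately before~\eqref{beta} in the text.

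Finally I would identify this annihilator condition with $\beta=0$ by unwinding~\eqref{beta}. Using $\mathcal{F}^{+}L_{d}(q_{k-1},q_{k})=(q_{k},D_{2}L_{d}(q_{k-1},q_{k}))$ and $\mathcal{F}^{-}L_{d}(q_{k},q_{k+1})=(q_{k},-D_{1}L_{d}(q_{k},q_{k+1}))$, the covector whose image under $i^{t}_{(q_{k-1},q_{k},q_{k+1})}$ defines $\beta$ is precisely $D_{2}L_{d}(q_{k-1},q_{k})+D_{1}L_{d}(q_{k},q_{k+1})\in T^{\ast}_{q_{k}}Q$. Since $i_{(q_{k-1},q_{k},q_{k+1})}$ is the inclusion of $D_{V}|_{(q_{k-1},q_{k},q_{k+1})}$ into $T_{q_{k}}Q$, its transpose $i^{t}$ is restriction of covectors, whose kernel is exactly the annihilator $\left(D_{V}|_{(q_{k-1},q_{k},q_{k+1})}\right)^{\circ}$. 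Therefore $\beta(q_{k-1},q_{k},q_{k+1})=0$ if and only if the displayed annihilator condition holds, completing the equivalence.

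The computation is routine, as the \emph{straightforward} in the text suggests; the only points demanding care are the reindexing together with the cancellation of the boundary terms, and the clean identification of $\ker i^{t}$ with the annihilator under the natural identification of the fiber of $(p_{2}^{3})^{\ast}(TQ)$ over $(q_{k-1},q_{k},q_{k+1})$ with $T_{q_{k}}Q$.
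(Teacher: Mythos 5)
Your proof is correct and follows exactly the argument the paper intends: the paper offers no written proof (it calls the result ``straightforward''), but the displayed annihilator condition derived from the arbitrariness of admissible variations, together with the definition of $\beta$ via the transpose of the inclusion, is precisely the chain you spell out. The discrete summation by parts, the use of variations supported at a single node (admissible because each $D_{V}|_{\cdot}$ is a subspace and hence contains $0$), and the identification of $\ker i^{t}$ with the annihilator are all handled correctly.
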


\begin{remark}
  \label{rem:discrete_holonomic_as_DSOCS} A discrete nonholonomic
  system as introduced in~\cite{ar:cortes_martinez-non_holonomic_integrators} is a discrete
  Lagrangian system $(Q,L_{d})$ with discrete constraint space
  $\mathcal{D}_{d}\subset Q\times Q$ (we say first order) and allowed
  variation distribution $\mathcal{D} \subset TQ$ (we say zeroth
  order). In particular, a discrete holonomic system, in the sense of
  Remark 3.3 of~\cite{ar:cortes_martinez-non_holonomic_integrators}, corresponds to the case where
  $\mathcal{D}$ is an integrable distribution and $\mathcal{D}_{d} =
  \cup_{r} \mathcal{N}_{r}\times\mathcal{N}_{r}$, where
  $\mathcal{N}_{r}$ are the integral submanifolds of $\mathcal{D}$.

  In both cases, their trajectories $q_{\cdot}=(q_{0},\ldots,q_{N})$
  are the solutions of
  \begin{equation*}
    \begin{cases}
      D_{1} L_{d}(q_{k},q_{k+1}) + D_{2} L_{d}(q_{k-1},q_{k})
      \in\mathcal{D}_{q_{k}}^{\circ},\\
      (q_{k},q_{k+1}) \in\mathcal{D}_{d}
    \end{cases}
  \end{equation*}
  for all $k=1,\ldots N-1$ and that, additionally, satisfy
  $(q_{0},q_{1})\in\mathcal{D}_{d}$. Notice that these conditions are
  equivalent to
  \begin{equation*}
    \begin{cases}
      D_{1} L_{d}(q_{k},q_{k+1}) + D_{2} L_{d}(q_{k-1},q_{k})
      \in\mathcal{D}_{q_{k}}^{\circ},\\
      (q_{k},q_{k+1}) \in\mathcal{D}_{d},\\
      (q_{k-1,}q_{k}) \in\mathcal{D}_{d}
    \end{cases}
  \end{equation*}
  for all $k=1,\ldots,N-1$. In order to ensure the existence of
  trajectories, it is usually assumed ---and we will do so--- that the
  projection $p^{2}%
  _{1}:Q\times Q\rightarrow Q$ restricted to $\mathcal{D}_{d}$ is a
  submersion (see~\cite{ar:mclachlan_perlmutter-integrators_for_nonholonomic_mechanical_systems}, Prop.~3). This last
  condition is trivially satisfied in the holonomic case.

  DSOCSs extend the discrete holonomic and nonholonomic systems as
  follows.  Given a distribution $\mathcal{D}$ and a submanifold
  $\mathcal{D}_{d}$ as above, a DSOCS $(Q,L_{d},D_{K},D_{V})$ can be
  constructed by defining
  \begin{equation*}
    D_{K} := (Q\times\mathcal{D}_{d}) \cap(\mathcal{D}_{d} \times Q) \quad\text{
      and } \quad D_{V} := (p_{2}^{3})^{*}(\mathcal{D}).    
  \end{equation*}
  Notice that $D_{K}$ is indeed a submanifold of $Q\times Q\times Q$
  because it is a transversal intersection of two submanifolds; the
  transversality condition follows from $p^{2}_{1}|_{\mathcal{D}_{d}}$
  being a submersion. It is easy to see that both systems, the
  discrete nonholonomic system and the related DSOCS, have the same
  trajectories.
\end{remark}

\begin{remark}
  Other ``higher order'' discrete mechanical systems have been
  considered in the literature. One such example is that of higher
  order discrete Lagrangian
  mechanics~\cite{ar:benito_deleon_martindediego-higher_order_discrete_lagrangian_mechanics},
  consisting of unconstrained systems with Lagrangians that may depend
  on more than two points. Also, discrete higher order vakonomic
  systems have been considered in, for
  example,~\cite{ar:colombo_martindediego_zuccalli-higher_order_discrete_variational_problems_with_constraints}. These are
  constrained systems where the Lagrangians also depend on more than
  two points and the trajectories correspond to a purely variational
  problem, just as in the continuous case mentioned in
  Remark~\ref{rem:nonholonomic_and_socs}.
\end{remark}

\begin{remark}
  From a theoretical point of view, one could be interested in a
  discrete analogue of the higher order constrained systems (in the
  sense of \cite{ar:cendra_ibort_de_leon_martin-a_generalization_of_chetaevs_principle_for_a_class_of_higher_order_nonholonomic_constraints,ar:grillo-higher_order_constrained_hamiltonian_systems}). Such an analogue can
  be obtained following ideas similar to the ones introduced in this
  section for order $2$.  For instance, a discrete kinematic
  constraint of order $k$ would be a submanifold of $Q^{k+1}$ and the
  variational constraints of order $k$ would be contained in the
  pullback bundle by $p_{j}^{k+1}:Q^{k+1}\rightarrow Q$ of $TQ$ for a
  choice of $j\in\{1,\ldots,k+1\}$.
\end{remark}


\section{Examples}
\label{sec:examples}

In this section we discuss how to apply DSOCSs to construct numerical
integrators for two (continuous) systems with second order constraints. In
each case, we picked simple discretizations to associate a discrete system to
the continuous one. Our main objective is to show how the numerical integrator
is constructed and some characteristics of its behavior. Other discretizations
and details can be found in~\cite{th:borda-sistemas_mecanicos_discretos_con_vinculos_de_orden_2}.

In this section, all angles are expressed in radians.


\subsection{Particle in the plane with prescribed signed curvature}
\label{Sect:example1}

Consider a particle in $\mathbb{R}^{2}$ forced to move with a given
signed curvature, $k:\mathbb{R}^{2}\rightarrow\mathbb{R}$, by the
effect of a force orthogonal to its velocity. For example, if the
particle is electrically charged, this could be achieved using a
magnetic field orthogonal to the plane.


\subsubsection{Continuous case}

We first describe the system in terms of Definition~\ref{socs} (see
Figure~\ref{ex1_draw} to visualize the meaning of the following
variables).

\begin{enumerate}
\item $Q:=\mathbb{R}^{2}$, with coordinates $q=(x,y)$.
\item $L((x,y),(\dot{x},\dot{y})) :=
  \frac{1}{2}m(\dot{x}^{2}+\dot{y}^{2})$, where $m$ is the mass of the
  particle.
\item Kinematic constraints: the submanifold $C_{K}\subset T^{(2)}Q$
  is defined by $\frac{d\theta}{ds}=k(x,y)$, where $\theta$ is the
  polar angle of the velocity of the particle and $ds$ is the element
  of the arc length.  Explicitly, the equation becomes
  \begin{equation}\label{exsocskin}
    \frac{\dot{x}\ \ddot{y}-\ddot{x}\ \dot{y}}{\left\Vert
        (\dot{x},\dot{y})\right\Vert ^{3}}=k(x,y).
  \end{equation}
\item Variational constraints: for each $\eta=
  ((x,y),(\dot{x},\dot{y}), (\ddot{x},\ddot{y})) \in T^{(2)}Q$, the
  subspace $C_{V}|_{\eta}$ is defined as the span of
  $(\dot{x},\dot{y})$ in $T_{(x,y)}Q$.
\end{enumerate}

In this case equation~\eqref{eq:socsmotion_kin_and_var} is equivalent
to equation~\eqref{exsocskin} together with $m\ddot{x}=\lambda\dot{y}$
and $m\ddot{y}=-\lambda\dot{x}$, where $\lambda$ is an unknown
Lagrange multiplier.

\begin{figure}[pth]
  \begin{center}
    \includegraphics[width=0.45\textwidth]{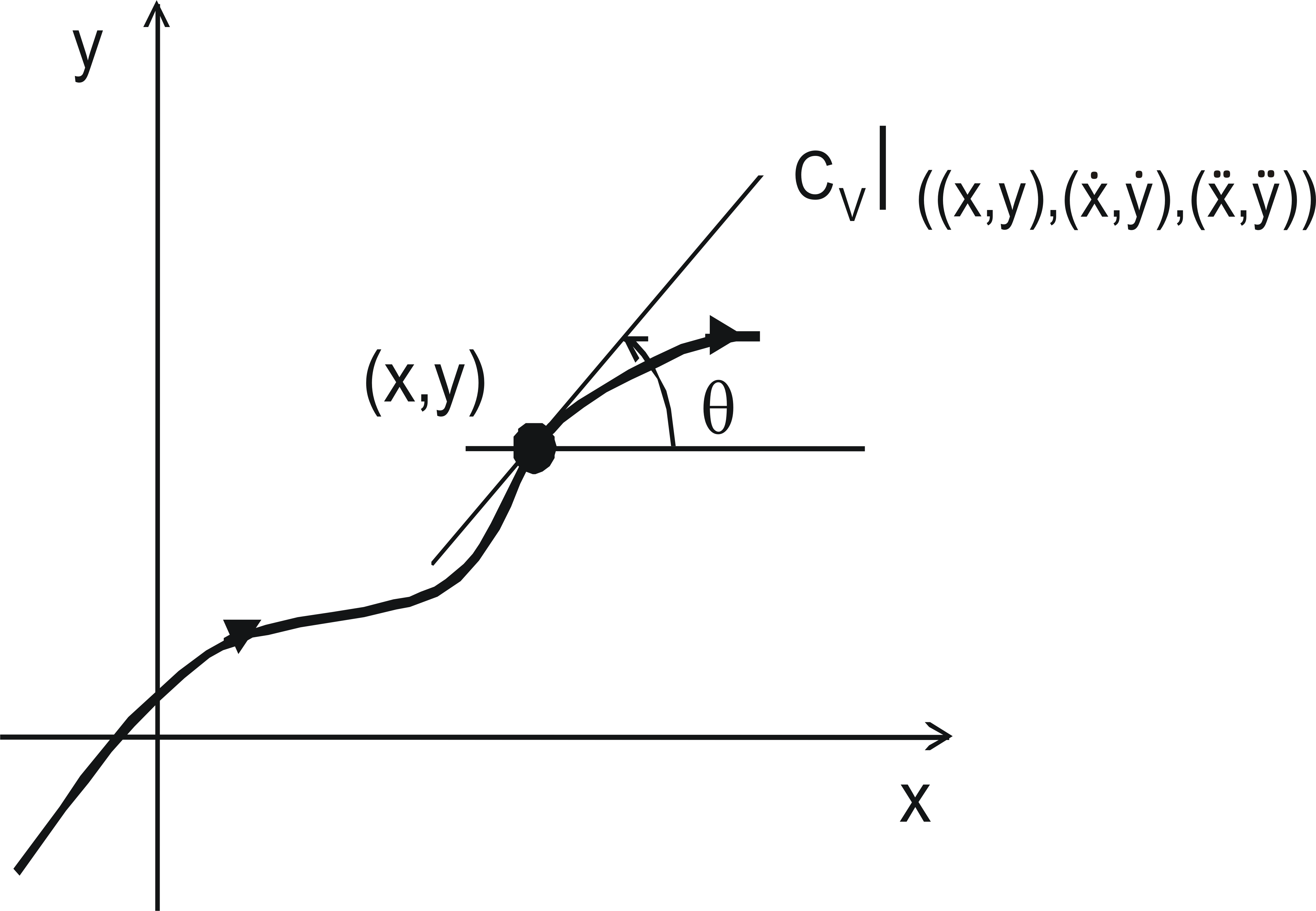}
  \end{center}
  \caption{Scheme of the particle in the plane with prescribed signed
    curvature.  The polar angle $\theta$ of the particle's velocity
    and the variational constraints at
    $((x,y),(\dot{x},\dot{y}),(\ddot{x},\ddot{y}))$ are indicated}
  \label{ex1_draw}
\end{figure}


\subsubsection{Discrete case}
\label{Sect:ex1disc}

We now associate a DSOCS to this SOCS in order to approximate its
trajectory $q(t)$ by a discrete one, $q_{\cdot}$, in such a way that
$q_{0}\approx q(0)$, $q_{1}\approx q(h)$, $q_{2}\approx q(2h)$, and so
on, where $h\in\mathbb{R}$ is the constant time step. We use the
following particular discretization process.

\begin{enumerate}
\item $Q=\mathbb{R}^{2}$.
\item $L_{d}:=L\circ\varphi_{L_{d}}^{-1}$ where $\varphi_{L_{d}} :
  TQ\rightarrow Q^{2}$ is defined in terms of its inverse by
  \begin{equation*}
    \varphi_{L_{d}}^{-1}(q_{0},q_{1}) :=\left( q_{0},\frac{q_{1}-q_{0}}{h}\right)
    .    
  \end{equation*}
\item Discrete kinematic constraints: $D_{K}:=\varphi_{D_{K}}(C_{K})$
  where $\varphi_{D_{K}}:T^{(2)}Q\rightarrow Q^{3}$ is defined by
  \begin{equation*}
    \varphi_{D_{K}}^{-1}(q_{0},q_{1},q_{2}) := \left( q_{1}, \frac{q_{2}-q_{0}%
      }{2h}, \frac{q_{2}-2q_{1}+q_{0}}{h^{2}}\right) .    
  \end{equation*}
\item Discrete variational constraints: defining $\varphi_{D_{V}}
  :=\varphi_{D_{K}}$,
  \begin{align*}
    D_{V}|_{(q_{0},q_{1},q_{2})}: &
    =C_{V}|_{\varphi_{D_{V}}^{-1}(q_{0}%
      ,q_{1},q_{2})}\\
    & =\left\langle \left\{ \left( (x_{1},y_{1}),
          \frac{x_{2}-x_{0}}{2h} \ \frac{\partial}{\partial
            x_{1}}+\frac{y_{2}-y_{0}}{2h}\ \frac{\partial }{\partial
            y_{1}} \right) \right\} \right\rangle .
  \end{align*}

\end{enumerate}

Equation~\eqref{eq:motionkin_and_beta} leads to a system of nonlinear
equations in $x_{2}$ and $y_{2}$,
\begin{align}
  \frac{\frac{\displaystyle x_{2}-x_{0}}{\displaystyle 2h}\ \frac{
      \displaystyle y_{2}-2y_{1}+y_{0}}{\displaystyle h^{2}}-\frac
    {\displaystyle x_{2}-2x_{1}+x_{0}}{\displaystyle h^{2}}\ \frac
    {\displaystyle y_{2}-y_{0}}{\displaystyle 2h}}{\left\Vert \left(
        \frac{\displaystyle x_{2}-x_{0}}{\displaystyle
          2h},\frac{\displaystyle y_{2} -y_{0} }{\displaystyle
          2h}\right) \right\Vert ^{3}} & =
  k(x_{1}, y_{1})\label{eq:ex1int1}\\
  (x_{2}-2x_{1}+x_{0})(x_{2}-x_{0}) +
  (y_{2}-2y_{1}+y_{0})(y_{2}-y_{0}) & =0.\label{eq:ex1int2}
\end{align}

To simulate the case for which $k=1$, $x(0)=y(0)=0$ and
$\dot{x}(0)=\dot {y}(0)=1$, we took different values of $h$ and solved
equations~\eqref{eq:ex1int1} and \eqref{eq:ex1int2} iteratively (using
the algorithm FindRoot of Mathematica 6.0 at each step) starting with
the discrete initial conditions $x_{0}=y_{0}=0$, $x_{1}=x_{0}+h$ and
$y_{1}=y_{0}+h$. In this situation, we know that the exact solutions
of the continuous equations of motion are
\begin{equation*}
  x(t)=\cos(\sqrt{2}t-\frac{\pi}{4})-\frac{\sqrt{2}}{2} 
  \quad\text{ and } \quad
  y(t)=\sin(\sqrt{2}t-\frac{\pi}{4})+\frac{\sqrt{2}}{2}.  
\end{equation*}

\begin{figure}[ht]
  \begin{center}
    \includegraphics[width=\textwidth]{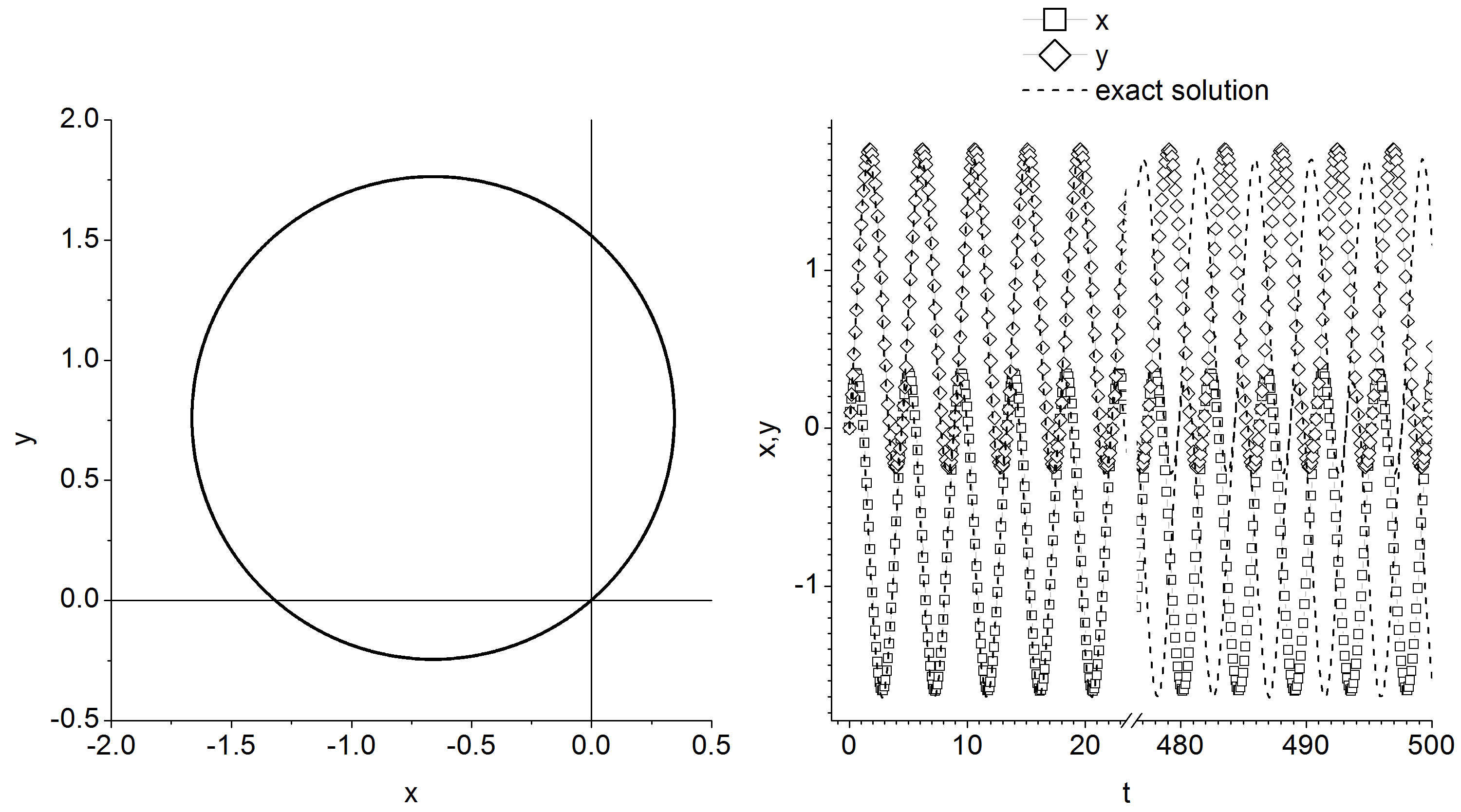}
  \end{center}
  \caption{Simulated evolution of the particle using our numerical
    integrator constructed from a DSOCS for $k=1$, $x(t)=y(t)=0$ and
    $\dot{x}(0)=\dot {y}(0)=1$. Constant time step used:
    $h=0.1$. LEFT: trajectory on the plane, RIGHT: comparison between
    our approximation and the exact solutions of $x$ and $y$ over two
    time intervals}
  \label{ex1_trajectory_and_xy}
\end{figure}

On the one hand, we found our results satisfactory at a qualitative
level (see Figure~\ref{ex1_trajectory_and_xy} corresponding to
$h=0.1$): as expected, the trajectory in the plane is a circumference
of radius $1$ which passes through the origin and is tangent to the
line of slope $1$ at that point; there are no changes in the amplitude
and the frequency of the oscillations of $x$ and $y$ during the time
of simulation $[0,500]$. This good behavior may be partially due to
the following property of the system: since each summand in
equation~\eqref{eq:ex1int2} is a difference of squares,
$(x_{2}-x_{1})^{2}-(x_{1}-x_{0})^{2}+(y_{2}-y_{1})^{2}-(y_{1}-y_{0})^{2}$
equals zero, so we have that $L_{d}(q_{0},q_{1})=L_{d}(q_{1},q_{2})$,
\emph{i.e.} our numerical integrator preserves the (discretized)
energy of the system as it occurs in the continuous case. Apart from
that, we can also say our integrator is symmetric
\cite{bo:hairer_lubich_wanner-geometric_numerical_integration}.

On the other hand, on the right side of Figure~\ref{ex1_trajectory_and_xy} we
see how the simulated evolution is slowly left behind by the exact solution.
Their maximum difference occurs near $t=500$. This maximum difference over the
$[0,500]$ time interval is what we take for the error of the numerical
integrator. Figure~\ref{ex1_errors} shows the error for several values of the
time step $h$. The slope of the line shown in the graph ($\approx1.6$)
suggests that the integrator is convergent of order $1$, according to Section
2.2.2 of~\cite{ar:marsden_west-discrete_mechanics_and_variational_integrators}.

\begin{figure}[h]
  \begin{center}
    \includegraphics[width=0.5\textwidth]{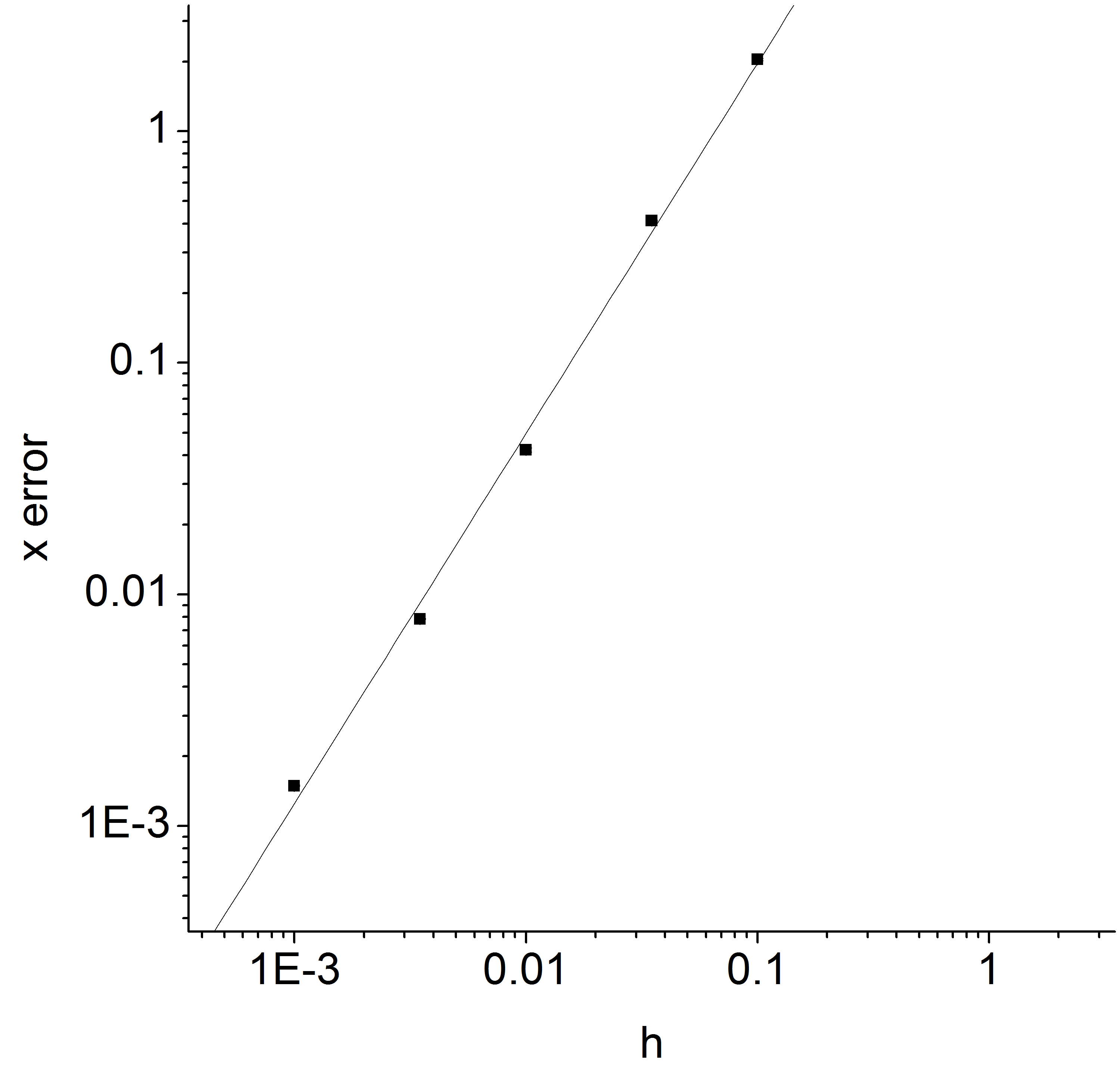}
  \end{center}
  \caption{Plot of the $x$-coordinate error vs $h$, using logarithmic
    scales}
  \label{ex1_errors}
\end{figure}


\subsection{Inertia wheel pendulum with a Lyapunov constraint}
\label{Sect:example2}

In Reference~\cite{ar:grillo_marsden_nair-lyapunov_constraints_and_global_asymptotic_stabilization}, a method for asymptotic
stabilization of underactuated mechanical systems has been studied. It
consists of: $(1)$ impose on the system a second order constraint of
the form
\begin{equation}\label{eq:lyapconst}
  \frac{dV}{dt}(q(t),\dot{q}(t))=-F(q(t),\dot{q}(t)),
\end{equation}
the so-called \emph{Lyapunov constraints}, where
$F,V:TQ\rightarrow\mathbb{R}$ are nonnegative functions with $V$
proper and vanishing only at the desired equilibrium point; and $(2)$
find the related constraint force (to be implemented by the
actuators), which would play the role of the control law. It is clear
that, if the system satisfies the previous constraints, then
$V(q(t),\dot{q}(t))$ decreases over time, resulting in a Lyapunov
function. In order to ensure the existence of a related constraint
force, $V$ must satisfy a PDE that depends on the actuators.

\begin{figure}[b]
  \begin{center}
    \includegraphics[width=0.375\textwidth]{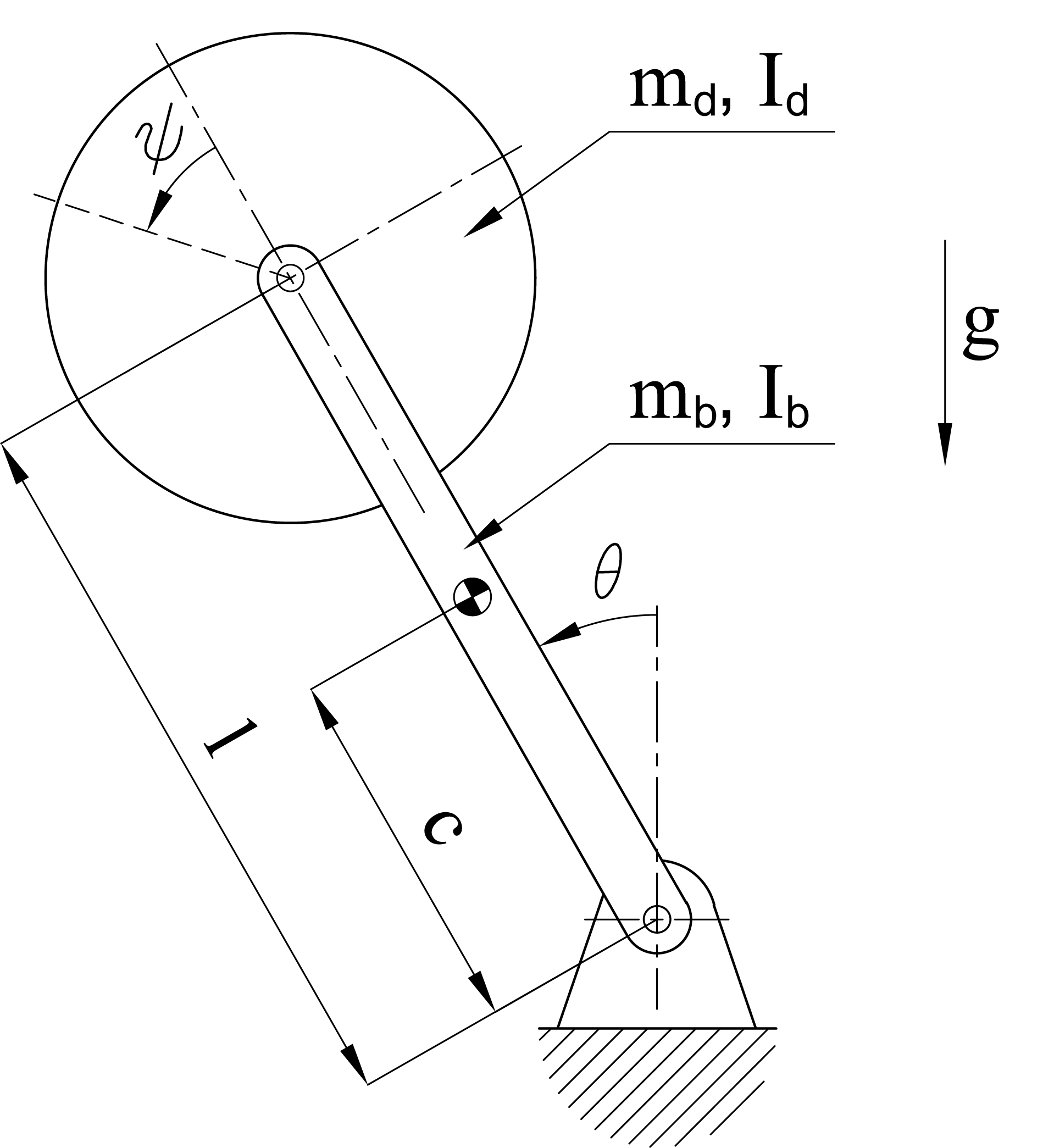}
  \end{center}
  \caption{Scheme of the inertia wheel pendulum. Some of the physical
    parameters associated to its components (masses, lengths and
    moments of inertia) as well as the coordinates used are
    indicated}
  \label{ex2_draw}
\end{figure}

It can be shown, in general, that the underactuated system
(\emph{i.e.} the mechanical system and the actuators) together with
the Lyapunov constraint define a SOCS. In the case of the inertia
wheel pendulum with one actuator on the wheel (see
Figure~\ref{ex2_draw}), if we want to asymptotically stabilize it at
the upright position, we can use the functions $V$ and $F$ found
in~\cite{ar:grillo_marsden_nair-lyapunov_constraints_and_global_asymptotic_stabilization} (Sect.~5.1). The SOCS defined by the
inertia wheel pendulum and the mentioned Lyapunov constraint is
described below.


\subsubsection{Continuous case}

We start by adapting the Hamiltonian description of the system given
in~\cite{ar:grillo_marsden_nair-lyapunov_constraints_and_global_asymptotic_stabilization} to the variational formulation of SOCSs.

\begin{enumerate}
\item $Q:=S^{1}\times S^{1}$, with coordinates $q=(\theta,\psi)$.
\item $L(\theta,\psi,\dot{\theta},\dot{\psi}):=
  \frac{1}{2}I\dot{\theta}^{2}+\frac{1}{2}J(\dot{\theta}+\dot{\psi})^{2}
  -\tilde{M}g(1+\cos(\theta))$, where $g$ is the acceleration of
  gravity and $I$, $J$ and $\tilde{M}$ are defined in terms of the
  masses, moments of inertia and characteristic lengths of the
  components of the system by $I:=m_{b}c^{2}+m_{d}l^{2}+I_{b}$,
  $J:=I_{d}$ and $\tilde{M}:=m_{b}c+m_{d}l$.
\item Kinematic constraints: the submanifold $C_{K} \subset T^{(2)}Q$
  is defined by equation~\eqref{eq:lyapconst} by choosing\footnote{The
    choices, explained in detail in~\cite{ar:grillo_marsden_nair-lyapunov_constraints_and_global_asymptotic_stabilization}, are
    aimed at making $V$ an energy-like function and $F$ a bounded
    function satisfying certain relations to guarantee the realization
    of the system as an actuated system under a bounded control
    signal.}
  \begin{equation*}
    \begin{split}
      V(\theta,\psi,\dot{\theta},\dot{\psi}):= & \
      \frac{1}{2}f[(I+J)\dot{\theta
      }+J\dot{\psi}]^{2}\\
      & +\frac{1}{2}h_{c}J^{2}(\dot{\theta}+\dot{\psi})^{2}+
      g_{c}J[(I+J)\dot
      {\theta}+J\dot{\psi}](\dot{\theta}+\dot{\psi})\\
      &  +\chi\lbrack1-\cos(\psi-n\theta)]+\frac{Me}{d}(1-\cos(\theta)),\\
      F(\theta,\psi,\dot{\theta},\dot{\psi}):= & \ 
      \rho\ \tanh\{g_{c}[(I+J)\dot{\theta}+J\dot{\psi}]+ 
      h_{c}J(\dot{\theta}+\dot{\psi})\}\ \\
      & \cdot\{g_{c}[(I+J)\dot{\theta}+J\dot{\psi}]+
      h_{c}J(\dot{\theta}+\dot{\psi })\},
    \end{split}
  \end{equation*}
  where $\chi,\rho, d, e >0$, $M:=\tilde{M}g$,
  $h_{c}:=d\frac{nb-c}{ac-b^{2}}$, $g_{c}:=d\frac{na-b}{ac-b^{2}}$,
  $f:=\frac{g_{c}^{2}+e}{h_{c}}$ with $a:=\frac{1}{I}$,
  $b:=-\frac{1}{I}$, $c:=\frac{1}{I}+\frac{1}{J}$, and
  $n\in\mathbb{Z}$ is such that $nb>c$. Note
  that~\eqref{eq:lyapconst} becomes a second order differential
  equation.
\item Variational constraints: for $\eta=
  ((\theta,\psi),(\dot{\theta},
  \dot{\psi}),(\ddot{\theta},\ddot{\psi})) \in T^{(2)}Q$, the subspace
  $C_{V}|_{\eta}$ is defined as the span of
  $\frac{\partial}{\partial\theta }|_{(\theta,\psi)}$ in
  $T_{(\theta,\psi)}Q$.
\end{enumerate}

Then, the trajectory conditions~\eqref{eq:socsmotion_kin_and_var}
become~\eqref{eq:lyapconst} and the system $\tilde{M}g\ \sin(\theta
)-I\ddot{\theta}-J(\ddot{\theta}+\ddot{\psi})=0$,
$-J(\ddot{\theta}+\ddot {\psi})=\lambda$, where $\lambda$ is an
unknown lagrange multiplier.


\subsubsection{Discrete case}

We want to construct a numerical integrator of the equations of motion
of this SOCS to provide an approximation of $q(t)$ as in
Section~\ref{Sect:ex1disc}.  From now on, we replace $S^{1}\times
S^{1}$ with its universal covering space $\mathbb{R}^{2}$ and adapt
all the elements of our SOCS to this new configuration space, which
can be done easily by letting $(\theta,\psi)$ vary over all the
plane. Physically, we capture the same dynamics by doing so but, for
practical issues, this allows us to discretize the whole $TQ$ space by
using a diffeomorphism onto $Q\times Q$. Recalling the discretizations
$\varphi_{L_{d}}$ and $\varphi_{D_{K}}$ used in
Section~\ref{Sect:ex1disc}, we propose the following DSOCS:

\begin{enumerate}
\item $Q:=\mathbb{R}^{2}$.
\item $L_{d}:=L\circ\varphi_{L_{d}}^{-1}$.
\item Discrete kinematic constraints: $D_{K}:=\varphi_{D_{K}}(C_{K})$.
\item Discrete variational constraints: $D_{V}|_{(q_{0},q_{1},q_{2})}
  := C_{V}|_{\varphi_{D_{V}}^{-1}(q_{0},q_{1},q_{2})}$.
\end{enumerate}

The second condition of~\eqref{eq:motionkin_and_beta} leads to
\begin{equation}\label{ex2eq1}
  (I+J)\frac{(\theta_{2}-2\theta_{1}+\theta_{0})}{h^{2}} + 
  J\frac{(\psi_{2}-2\psi_{1}+\psi_{0})}{h^{2}}-M\sin(\theta_{1})=0.
\end{equation}
Substituting $\psi_{2}$ from~\eqref{ex2eq1} into the first condition
of~\eqref{eq:motionkin_and_beta} leads to a nonlinear equation
involving only $\theta_{2}$,
\begin{equation}\label{ex2eq2}
  A\theta_{2}^{2}+B\theta_{2}+C=-\rho\tanh(D\theta_{2}+E)
  (D\theta_{2}+E),
\end{equation}
where $A$ equals the constant $-\frac{dI^{2}(I+(n+1)J)}{2h^{3}}$, and
$B$, $C$, $D$ and $E$ depend on the system constants, the time step
$h$, and the initial data $\theta_{0}$, $\theta_{1}$, $\psi_{0}$ and
$\psi_{1}$.

We used this DSOCS as a numerical integrator and tested it with
parameters $I=312.5$, $J=2.0772$, $M=37.98$, $d=1$, $e=1000$,
$\chi=100$, $n=-154$, $\rho=2$, and initial conditions
$\theta(0)=0.5$, $\psi(0)=0$, $\dot{\theta }(0)=0$,
$\dot{\psi}(0)=0.5$. We took different values of $h$ and
solved~\eqref{ex2eq2} iteratively (using the algorithm FindRoot of
Mathematica 6.0 and then calculating $\psi_{2}$ using~\eqref{ex2eq1})
starting with the discrete initial conditions $\theta_{0}=0.5$,
$\psi_{0}=0$, $\theta_{1} = \theta_{0}$ and $\psi_{1}=\psi_{0}+0.5h$.

\begin{figure}[b]
  \begin{center}
    \includegraphics[width=1.\textwidth]{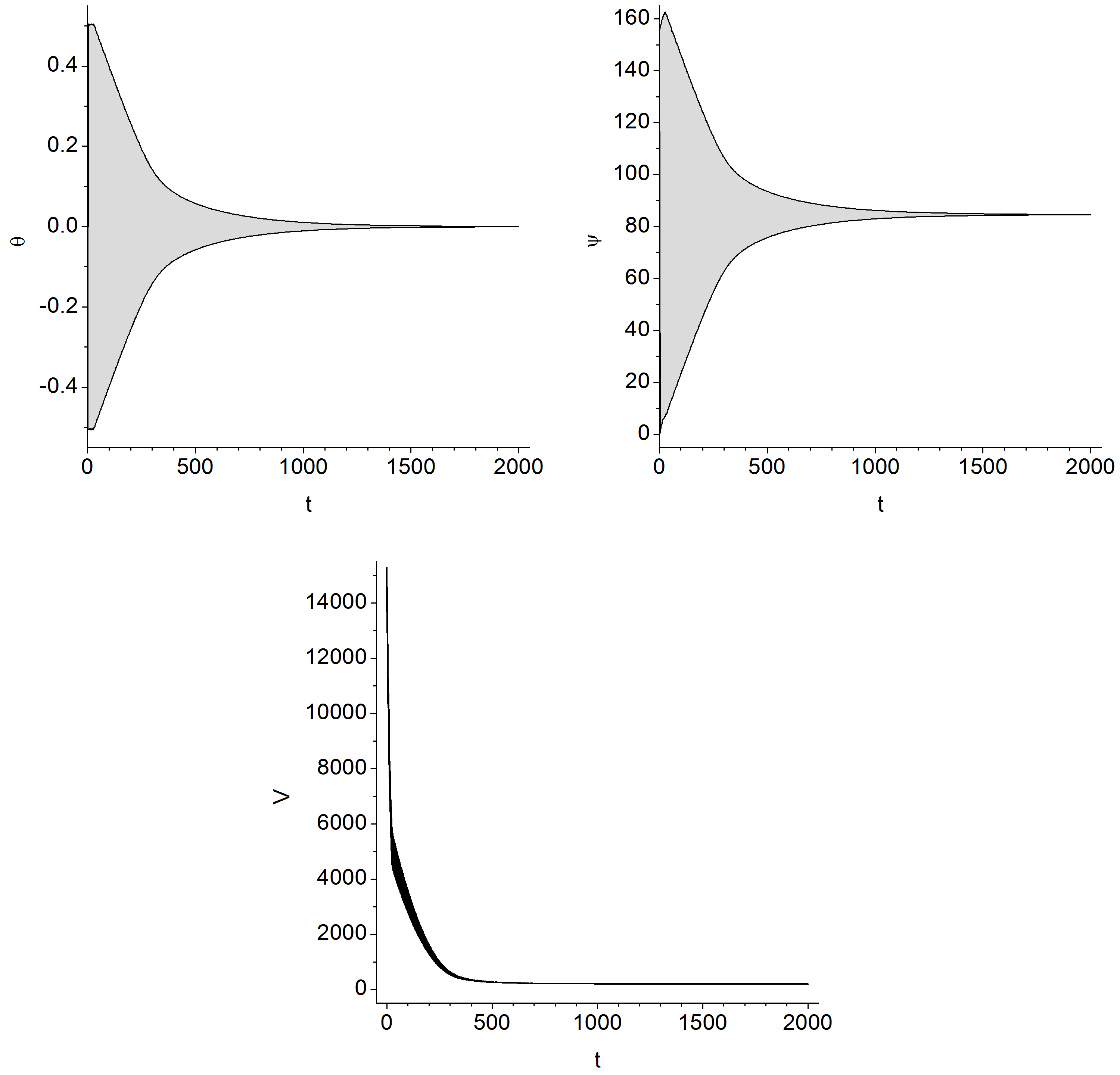}
  \end{center}
  \caption{Simulated evolution of $\theta$, $\psi$ and $V$ using our
    numerical integrator constructed from a DSOCS for the initial
    conditions $\theta (0)=0.5$, $\psi(0)=0$, $\dot{\theta}(0)=0$,
    $\dot{\psi}(0)=0.5$. Constant time step used: $h=0.1$. The gray
    area in the first two graphs corresponds to a fast oscillation}
  \label{ex2_th_psi_V}
\end{figure}

Solutions of equation~\eqref{ex2eq2}, whenever they exist, usually
come in pairs, but in order to simulate the evolution of our SOCS we
had to choose one. This phenomenon is a consequence of the equations
of motion being algebraic equations ---rather than differential
equations--- and, so, it is present in all types of discrete
mechanical systems, including DSOCSs. For the present example, we
adopted the criterion of picking the solution that is closer to the
previous position $\theta$ in each step. However, this works as long
as the two candidate solutions are sufficiently apart. When this does
not occur, we noticed that the correct behavior is obtained by
choosing the solution that decreases $F$ and, consequently $V$, as
desired.

To test the behavior of our numerical integrator, we used the output
of the sophisticated algorithm NDSolve of Mathematica 6.0 as the exact
solution.  Figure~\ref{ex2_th_psi_V} corresponds to a time step
$h=0.1$; the plots obtained with NDSolve are omitted in there because
they are indistinguishable from those coming from our simulations, at
least, for the scales used in the figure. Hence, our simulations are
consistent qualitatively with the one provided by NDSolve. The
coordinates $\theta$ and $\psi$ exhibit damped oscillatory behavior in
time associated to the asymptotic stabilization of the pendulum at its
upright position ($t \approx1000$); as it is required by the kinematic
constraint, the value of the Lyapunov function decreases with time
tending to zero ($t \approx500$).

As in the previous example, we use the maximum difference between the
numerical integrator and NDSolve solutions over the $[0,2000]$ time
interval as the error of the numerical
integrator. Figure~\ref{ex2_errors} shows the error for several values
of the time step $h$. The slope of the line shown in the graph
($\approx1.3$) suggests that the integrator is convergent of order
$1$, according to Section 2.2.2 of~\cite{ar:marsden_west-discrete_mechanics_and_variational_integrators}.

\begin{figure}[h]
  \begin{center}
    \includegraphics[width=0.52\textwidth]{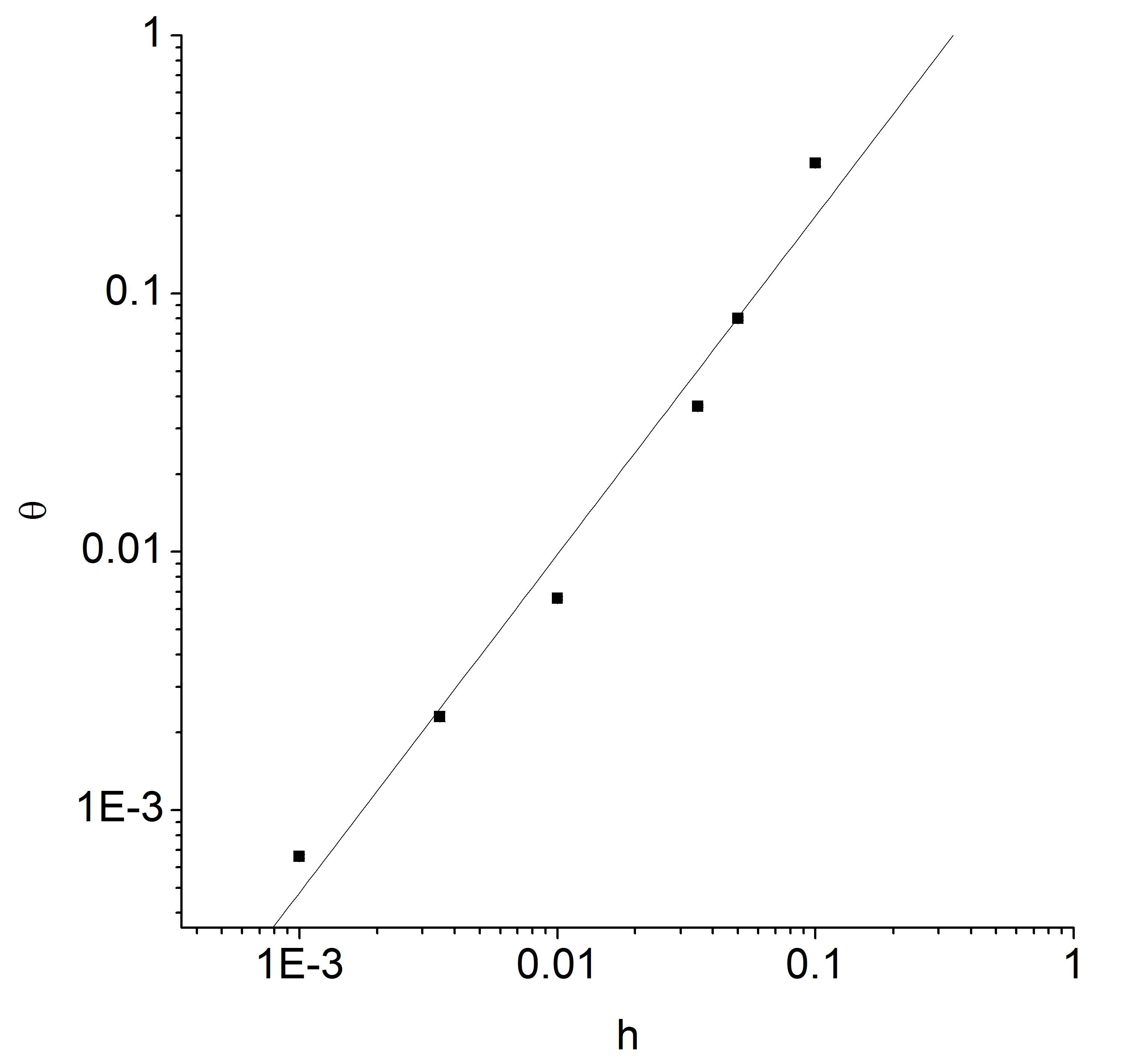}
  \end{center}
  \caption{Plot of the $\theta$-coordinate error vs $h$, using
    logarithmic scales}
  \label{ex2_errors}
\end{figure}


\section{Some properties of the discrete flow}
\label{Sect:discflow}

In this section we study the evolution of a DSOCS from the point of
view of a discrete flow function. Let $(Q,L_{d},D_{K},D_{V})$ be a
DSOCS such that $D_{V}$ is a vector subbundle of
$(p^{3}_{2})^{*}(TQ)$. Fix a trajectory $(q_{0},q_{1},q_{2})$ of the
system and an open set $U\subset Q\times Q\times Q$ containing it. It
is convenient to choose a smooth map $\phi:D_{V}^{\ast
}|_{U}\rightarrow\mathbb{R}^{n_{V}}$ such that $\phi^{-1}(\{0\})$ is
the image of the zero section of $D_{V}^{\ast}|_{U}$ (locally, this
imposes no restriction). Then, we have the following existence result.

\begin{theorem} [Discrete flow] 
  \label{discreteflow} 
  Assume that the DSOCS described above also satisfies the following
  conditions.
  \begin{enumerate}
  \item \label{it:constant_rank} $\phi\circ\beta|_{D_{K}\cap U}$ has
    constant rank,
  \item \label{it:injectivity} The restrictions of
    $D_{3}(\phi\circ\beta|_{U})(q_{0},q_{1},q_{2})$ and
    $D_{1}(\phi\circ\beta|_{U})(q_{0},q_{1},q_{2})$ to the subspace
    $T_{(q_{0},q_{1},q_{2})}D_{K}$ are injective (see~\eqref{eq:D_j}).
  \end{enumerate}
  Then, there exists a diffeomorphism $F_{L_{d}}:C_{d}\rightarrow
  F_{L_{d}}(C_{d})$, called \emph{discrete flow}, between submanifolds of
  $Q\times Q$ containing $(q_{0},q_{1})$ and $(q_{1},q_{2})$,
  respectively, such that
  \begin{enumerate}[i.] 
  \item \label{it:flows-FLd_maps_trajectory}
    $F_{L_{d}}(q_{0},q_{1})=(q_{1},q_{2})$ and
  \item \label{it:flows-FLd_is_a_flow}
    $(\hat{q}_{0},\hat{q}_{1},(p_{2}^{2}\circ
    F_{L_{d}})(\hat{q}_{0},\hat{q}_{1}))$ is a trajectory $\forall
    (\hat{q}_{0},\hat{q}_{1})\in C_{d}$.
  \end{enumerate}
\end{theorem}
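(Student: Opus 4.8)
The plan is to realize the set of trajectory segments near $(q_0,q_1,q_2)$ as an embedded submanifold $\mathcal{T}$ of $Q^3$ on which both projections $p^3_{1,2}$ and $p^3_{2,3}$ restrict to diffeomorphisms onto submanifolds of $Q\times Q$; the discrete flow is then the composite that turns $(q_0,q_1)$ into $(q_1,q_2)$. Write $F:=\phi\circ\beta|_U$ and $G:=F|_{D_K\cap U}$. Since $\phi^{-1}(\{0\})$ is the image of the zero section and $\beta$ is a section of $D_V^{\ast}$, the condition $\beta=0$ in~\eqref{eq:motionkin_and_beta} is equivalent to $\phi\circ\beta=0$; hence the trajectory segments contained in $U$ are exactly the points of $\mathcal{T}:=G^{-1}(\{0\})$. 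By hypothesis~\ref{it:constant_rank}, $G$ has constant rank near $(q_0,q_1,q_2)$, so the constant rank theorem guarantees that, after shrinking $U$, $\mathcal{T}$ is an embedded submanifold of $D_K\cap U$ with $T_{(q_0,q_1,q_2)}\mathcal{T}=\ker DG=\{v\in T_{(q_0,q_1,q_2)}D_K : DF(v)=0\}$.

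I would then show that $p^3_{1,2}|_{\mathcal{T}}$ is a local diffeomorphism onto a submanifold of $Q\times Q$. Let $v\in T_{(q_0,q_1,q_2)}\mathcal{T}$ satisfy $Dp^3_{1,2}(v)=0$, so that $v$ has vanishing first two components. Then, by~\eqref{eq:D_j}, $DF(v)=D_3(\phi\circ\beta|_U)(q_0,q_1,q_2)(v)$, and since $v\in T_{(q_0,q_1,q_2)}\mathcal{T}$ we have $DF(v)=0$; as moreover $v\in T_{(q_0,q_1,q_2)}D_K$, hypothesis~\ref{it:injectivity} forces $v=0$. Thus $Dp^3_{1,2}$ is injective on $T\mathcal{T}$, so $p^3_{1,2}|_{\mathcal{T}}$ is an immersion, hence a local embedding. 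Shrinking to a neighborhood $V$ of $(q_0,q_1,q_2)$ in $\mathcal{T}$, the image $C_d:=p^3_{1,2}(V)$ is a submanifold of $Q\times Q$ containing $(q_0,q_1)$, and $p^3_{1,2}|_V:V\to C_d$ is a diffeomorphism. The identical argument with $D_1$ in place of $D_3$ (now the vectors with vanishing last two components) shows, after a further shrinking of $V$, that $p^3_{2,3}|_V$ is a diffeomorphism onto a submanifold $C_d'$ of $Q\times Q$ containing $(q_1,q_2)$.

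Finally, I would set $F_{L_d}:=(p^3_{2,3}|_V)\circ(p^3_{1,2}|_V)^{-1}:C_d\rightarrow C_d'$, which is a composite of diffeomorphisms and hence a diffeomorphism onto $F_{L_d}(C_d)=C_d'$. Property~\ref{it:flows-FLd_maps_trajectory} is immediate from $(q_0,q_1,q_2)\in V$. For property~\ref{it:flows-FLd_is_a_flow}, given $(\hat{q}_0,\hat{q}_1)\in C_d$ the point $(p^3_{1,2}|_V)^{-1}(\hat{q}_0,\hat{q}_1)$ is the unique element of $V\subset\mathcal{T}$ lying over $(\hat{q}_0,\hat{q}_1)$; its third entry is precisely $(p^2_2\circ F_{L_d})(\hat{q}_0,\hat{q}_1)$, so the triple $(\hat{q}_0,\hat{q}_1,(p^2_2\circ F_{L_d})(\hat{q}_0,\hat{q}_1))$ lies in $\mathcal{T}$ and is therefore a trajectory by construction.

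The main obstacle is the first step. One must ensure that hypothesis~\ref{it:constant_rank} is exactly what upgrades the level set $G^{-1}(\{0\})$ into a manifold carrying the stated tangent space, since without a rank hypothesis the trajectory set could fail to be smooth; and one must read the injectivity in hypothesis~\ref{it:injectivity} as a statement about the restrictions of $D_1$ and $D_3$ to $T_{(q_0,q_1,q_2)}D_K$ rather than to all of $T_{(q_0,q_1,q_2)}Q^3$, which is what makes the two tangent computations above valid. Once $\mathcal{T}$ is known to be a manifold with $T\mathcal{T}=\ker DG$, the remaining steps are routine invocations of the constant rank theorem and of the fact that an injective immersion is locally an embedding onto a submanifold.
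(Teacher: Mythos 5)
Your proposal is correct and follows essentially the same route as the paper's proof: the constant rank hypothesis makes the zero set of $\phi\circ\beta|_{D_K\cap U}$ a submanifold of trajectory triples, the injectivity hypotheses make $p^3_{1,2}$ and $p^3_{2,3}$ restrict to local diffeomorphisms onto submanifolds of $Q\times Q$, and $F_{L_d}$ is the composite $p^3_{2,3}\circ(p^3_{1,2})^{-1}$. The only cosmetic difference is that you shrink a single neighborhood twice where the paper intersects two neighborhoods $B\cap B'$.
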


\begin{proof}
  Section $\beta $ defined in~\eqref{beta} is smooth due to the
  smoothness of $D_{V}$. From condition~\ref{it:constant_rank} in the
  statement, $W:=\left( \phi \circ \beta |_{D_{K}\cap U}\right)
  ^{-1}(\{0\})$ is a submanifold of $D_{K}\cap U$. All the elements of
  $W$ are trajectories since they are the triples which satisfy
  condition~\eqref{eq:motionkin_and_beta}.  On the other hand, as $
  \ker (Dp_{1,2}^{3}|_{W}(q_{0},q_{1},q_{2}))= \ker (D(\phi \circ
  \beta |_{D_{K}\cap U})(q_{0},q_{1},q_{2}))\cap
  T_{(q_{0},q_{1},q_{2})}(\{q_{0}\}\times \{q_{1}\}\times Q)$, which
  vanishes by condition~\ref{it:injectivity} in the statement,
  $p_{1,2}^{3}|_{W}$ is a local immersion at $(q_{0},q_{1},q_{2})$.
  It follows that $p_{1,2}^{3}|_{W}$ is a local diffeomorphism between
  a neighborhood $B\subset W$ of $(q_{0},q_{1}\,,q_{2})$ and a
  submanifold of $Q\times Q$ containing $ (q_{0},q_{1})$. Analogously,
  by condition~\ref{it:injectivity} in the statement,
  $p_{2,3}^{3}|_{W}$ is a local diffeomorphism between a neighborhood
  $B^{\prime }\subset W$ of $(q_{0},q_{1},q_{2})$ and a submanifold of
  $ Q\times Q$ containing $(q_{1},q_{2})$.  Finally, let
  $C_{d}:=p_{1,2}^{3}(B\cap B^{\prime })$ and define
  \begin{equation*}
    F_{L_{d}}:C_{d}\rightarrow Q\times Q \quad \text{ by } \quad F_{L_{d}} :=
    p_{2,3}^{3}\circ (p_{1,2}^{3}|_{B\cap B^{\prime}})^{-1}.
  \end{equation*}
  Then $C_d$ and $F_{L_{d}}(C_{d}) = p_{2,3}^{3}(B\cap B^{\prime})$
  are submanifolds of $Q\times Q$, $F_{L_{d}}:C_{d}\rightarrow
  F_{L_{d}}(C_{d})$ is a diffeomorphism and
  conditions~\ref{it:flows-FLd_maps_trajectory}
  and~\ref{it:flows-FLd_is_a_flow} in the statement are satisfied.
\end{proof}

\begin{remark}
  \label{rem:discrete_holonomic_and_flows} When a DSOCS comes from a
  discrete holonomic system (see
  Remark~\ref{rem:discrete_holonomic_as_DSOCS}), we have that
  \begin{equation*}
    C_{d} \subset p^{3}_{1,2}(D_{K}) \subset\mathcal{D}_{d} = \cup_{r}
    \mathcal{N}_{r}\times\mathcal{N}_{r}.    
  \end{equation*}
  Let $C_{d,r}:= C_{d} \cap(\mathcal{N}_{r}\times\mathcal{N}_{r})$. It
  is easy to check that $F_{L_{d}}(C_{d,r}) =
  F_{L_{d}}(C_{d})\cap(\mathcal{N}_{r}\times\mathcal{N}_{r})$.
\end{remark}


Let $Q$ and $L_{d}$ be as in Definition~\ref{DSOCS}. Following the literature
(see~\cite{ar:cortes_martinez-non_holonomic_integrators}), we define the \emph{discrete Lagrangian
$1$-forms} $\theta_{L_{d}}^{-}$, $\theta_{L_{d}}^{+}\in\Omega^{1}(Q\times Q)$
by
\begin{equation}\label{theta+-}
  \begin{split}
    \theta_{L_{d}}^{-}(q,q^{\prime})(v_{q},v_{q^{\prime}}) : &
    =\mathcal{F}^{-}L_{d}(q,q^{\prime})(v_{q})\\
    \theta_{L_{d}}^{+}(q,q^{\prime})(v_{q},v_{q^{\prime}}) : &
    =\mathcal{F}^{+}L_{d}(q,q^{\prime})(v_{q^{\prime}})
  \end{split}
\end{equation}
for all $(v_{q},v_{q^{\prime}})\in T_{(q,q^{\prime})}(Q\times Q)$. In
addition, we define the \emph{discrete Lagrangian $2$-form}
$\Omega_{L_{d}} \in\Omega^{2}(Q\times Q)$ as $\Omega_{L_{d}}:=
-d\theta_{L_{d}}^{+} = -d\theta_{L_{d}}^{-}$ (the last equality is
true because $dL_{d}=\theta _{L_{d}}^{+}-\theta_{L_{d}}^{-}$). It can
be seen that, under certain conditions of regularity on $L_{d}$,
$\Omega_{L_{d}}$ is a symplectic form.

\begin{theorem}[Evolution of $\Omega_{L_{d}}$] 
  Let $(Q,L_{d},D_{K},D_{V})$ be a DSOCS with discrete flow $
  F_{L_{d}}:C_{d}\rightarrow F_{L_{d}}(C_{d})$. Also, let
  $\Omega_{L_{d}}^{C_{d}}\in\Omega^{2}(C_{d})$ and
  $\Omega_{L_{d}}^{F_{L_{d}}(C_{d})}\in\Omega^{2}(F_{L_{d}}(C_{d}))$
  be the restrictions of $\Omega_{L_{d}}$ to the corresponding
  submanifolds of $Q\times Q$. Then,
  \begin{equation*}
    (F_{L_{d}})^{\ast}\left( \Omega_{L_{d}}^{F_{L_{d}}(C_{d})}\right)  =
    \Omega_{L_{d}}^{C_{d}}+d\xi,    
  \end{equation*}
  where $\xi\in\Omega^{1}(C_{d})$ is defined by
  \begin{equation}\label{eq:xidef}
    \xi(q_{0},q_{1})(\delta q_{0},\delta q_{1}) := 
    (\mathcal{F}^{+}L_{d}(q_{0},q_{1}) - 
    \mathcal{F}^{-}L_{d}(F_{L_{d}}(q_{0},q_{1})))(\delta q _{1})
  \end{equation}
  for all $(q_{0},q_{1})\in C_{d}$, and all $(\delta q_{0}, \delta
  q_{1})\in T_{(q_{0},q_{1})}C_{d}$.
\end{theorem}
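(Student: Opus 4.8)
The plan is to recognize the $1$-form $\xi$ as a difference of the two discrete Lagrangian $1$-forms transported by the flow, and then simply to differentiate, mirroring both the continuous Theorem (Evolution of $\Omega_L$) above and the computation in~\cite{ar:marsden_west-discrete_mechanics_and_variational_integrators}. Write $\iota\colon C_{d}\hookrightarrow Q\times Q$ and $\iota'\colon F_{L_{d}}(C_{d})\hookrightarrow Q\times Q$ for the inclusions, so that by definition $\Omega_{L_{d}}^{C_{d}}=\iota^{\ast}\Omega_{L_{d}}$ and $\Omega_{L_{d}}^{F_{L_{d}}(C_{d})}=(\iota')^{\ast}\Omega_{L_{d}}$. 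The heart of the argument is the pointwise identity of $1$-forms on $C_{d}$,
\begin{equation*}
  \xi = \iota^{\ast}\theta_{L_{d}}^{+} - (F_{L_{d}})^{\ast}\bigl((\iota')^{\ast}\theta_{L_{d}}^{-}\bigr).
\end{equation*}

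To establish this identity I would unwind the definitions~\eqref{theta+-} of $\theta_{L_{d}}^{\pm}$ together with those of the discrete Legendre transforms. On the one hand, $\theta_{L_{d}}^{+}(q_{0},q_{1})(\delta q_{0},\delta q_{1})=\mathcal{F}^{+}L_{d}(q_{0},q_{1})(\delta q_{1})$, which produces the first summand of~\eqref{eq:xidef}. On the other hand, since $F_{L_{d}}(q_{0},q_{1})=(q_{1},q_{2})$, the first projection satisfies $p_{1}^{2}\circ F_{L_{d}}=p_{2}^{2}|_{C_{d}}$, so the first component of $DF_{L_{d}}(q_{0},q_{1})(\delta q_{0},\delta q_{1})$ is exactly $\delta q_{1}$. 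As $\theta_{L_{d}}^{-}$ pairs $\mathcal{F}^{-}L_{d}$ only against the first tangent component, this gives $(F_{L_{d}})^{\ast}\bigl((\iota')^{\ast}\theta_{L_{d}}^{-}\bigr)(q_{0},q_{1})(\delta q_{0},\delta q_{1})=\mathcal{F}^{-}L_{d}(F_{L_{d}}(q_{0},q_{1}))(\delta q_{1})$. Subtracting reproduces precisely the defining expression~\eqref{eq:xidef} of $\xi$.

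With the identity in hand, the conclusion follows by applying $d$ and using that exterior differentiation commutes with pullback, that restriction to a submanifold is pullback along the inclusion, and that $\Omega_{L_{d}}=-d\theta_{L_{d}}^{+}=-d\theta_{L_{d}}^{-}$. Indeed,
\begin{equation*}
  \begin{split}
    d\xi &= \iota^{\ast}(d\theta_{L_{d}}^{+}) - (F_{L_{d}})^{\ast}(\iota')^{\ast}(d\theta_{L_{d}}^{-})\\
    &= -\Omega_{L_{d}}^{C_{d}} + (F_{L_{d}})^{\ast}\bigl(\Omega_{L_{d}}^{F_{L_{d}}(C_{d})}\bigr),
  \end{split}
\end{equation*}
which is exactly the claimed formula after transposing the first term.

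I do not expect a genuine obstacle: the statement is essentially formal once $\xi$ is recognized as above, and—unlike the existence of $F_{L_{d}}$ itself (Theorem~\ref{discreteflow})—its proof does not invoke the trajectory equation $\beta=0$ directly, the dynamics entering only through the definition of $F_{L_{d}}$. The only points demanding care are the bookkeeping of the two restrictions, to $C_{d}$ and to $F_{L_{d}}(C_{d})$, and the verification that the first component of $DF_{L_{d}}$ is $\delta q_{1}$. This last fact is what singles out $\delta q_{1}$ in~\eqref{eq:xidef}, rather than a variation constrained to lie in $D_{V}$, and thereby explains why $\xi$ need not vanish in the constrained case even though $\beta=0$ holds along the trajectory.
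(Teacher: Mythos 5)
Your proof is correct, and it follows a genuinely more direct route than the paper's. The paper mirrors the Marsden--West variational computation: it introduces the restricted discrete action $\hat{S}_{d}(q_{0},q_{1}):=L_{d}(q_{0},q_{1})+L_{d}(F_{L_{d}}(q_{0},q_{1}))$, expands $d\hat{S}_{d}$ into the interior term at $q_{1}$ plus boundary terms, reads off $\xi=d\hat{S}_{d}-\bigl(F_{L_{d}}^{\ast}((\theta_{L_{d}}^{+})^{F_{L_{d}}(C_{d})})-(\theta_{L_{d}}^{-})^{C_{d}}\bigr)$, and then applies $d$ so that $d^{2}\hat{S}_{d}$ drops out. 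You instead identify $\xi$ directly as $\iota^{\ast}\theta_{L_{d}}^{+}-F_{L_{d}}^{\ast}\bigl((\iota')^{\ast}\theta_{L_{d}}^{-}\bigr)$ (with $\iota,\iota'$ the inclusions of $C_{d}$ and $F_{L_{d}}(C_{d})$) and differentiate; the two decompositions are consistent because $d\hat{S}_{d}=\iota^{\ast}dL_{d}+F_{L_{d}}^{\ast}(\iota')^{\ast}dL_{d}$ and $dL_{d}=\theta_{L_{d}}^{+}-\theta_{L_{d}}^{-}$. The one computational point your argument hinges on --- that $p_{1}^{2}\circ F_{L_{d}}=p_{2}^{2}|_{C_{d}}$, so the first component of $DF_{L_{d}}(q_{0},q_{1})(\delta q_{0},\delta q_{1})$ is $\delta q_{1}$ --- is indeed guaranteed by the construction $F_{L_{d}}=p_{2,3}^{3}\circ(p_{1,2}^{3}|_{B\cap B'})^{-1}$ in the discrete-flow theorem, and your sign bookkeeping with $\Omega_{L_{d}}=-d\theta_{L_{d}}^{+}=-d\theta_{L_{d}}^{-}$ is right. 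What your route buys is brevity and the explicit observation that the identity is purely kinematic: it holds for any smooth map of the form $(q_{0},q_{1})\mapsto(q_{1},f(q_{0},q_{1}))$ between submanifolds, the dynamics entering only through which $F_{L_{d}}$ one has. What the paper's route buys is the exact parallel with the continuous theorem and the interpretation of $\xi$ as the discrepancy between $d\hat{S}_{d}$ and the boundary one-forms, which is the form in which the subsequent remark (that $\xi$ vanishes on variations with $\delta q_{1}\in D_{V}$, hence symplecticity in the unconstrained and holonomic cases) is most naturally read; your closing comment recovers that observation as well.
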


\begin{proof}
  The proof is based on \cite{ar:marsden_west-discrete_mechanics_and_variational_integrators} (Sect. 1.3.2). Let
  $(q_{0},q_{1})\in C_{d}$ and $(\delta q_{0},\delta q_{1})\in
  T_{(q_{0},q_{1})}C_{d}$. If $q_2 := (p_{2}^{2}\circ
  F_{L_{d}})(q_{0},q_{1})$ we can interpret $(\delta q_{0},\delta
  q_{1})$ as an infinitesimal variation of the initial condition
  inducing the infinitesimal variation $\delta q_2 := D(p_{2}^{2}\circ
  F_{L_{d}})(q_{0},q_{1})(\delta q_{0},\delta q_{1})$ over $q_2$.
  Define the restricted discrete action functional $\hat{S}_{d} :
  C_{d}\rightarrow \mathbb{R}$ by
  \begin{equation*}
    \hat{S}_{d}(q_{0},q_{1}) := S_{d}(q_{0}, q_{1},
    (p_{2}^{2}\circ F_{L_{d}})(q_{0},q_{1})).
  \end{equation*}
  From the definitions of the Lagrangian $1$-forms~\eqref{theta+-} it
  is easy to see that
  \begin{eqnarray*}
    d\hat{S}_{d}(q_{0},q_{1})(\delta q_{0},\delta q_{1})
    &=&dS_{d}(q_{0},q_{1},q_{2})(\delta q_{0},\delta
    q_{1},D(p_{2}^{2}\circ F_{L_{d}})(q_{0},q_{1})(\delta q_{0},\delta q_{1})) \\
    &=&(\mathcal{F}^{+}L_{d}(q_{0},q_{1})-
    \mathcal{F}^{-}L_{d}(q_{1},
    (p_{2}^{2}\circ F_{L_{d}})(q_{0},q_{1})))(\delta q_{1}) \\
    &&+\left[ \theta _{L_{d}}^{+}(q_{1},q_{2})(\delta q
      _{1},\delta q_{2})-\theta _{L_{d}}^{-}(q_{0},q_{1})(\delta
      q_{0},\delta q_{1})\right] .
  \end{eqnarray*}
  The bracketed term in the last sum is
  \begin{equation*}
    (F_{L_{d}}^{\ast }((\theta _{L_{d}}^+)^{F_{L_{d}}(C_{d})})-(\theta
    _{L_{d}}^-)^{C_{d}})(q_{0},q_{1})(\delta q_{0},\delta q_{1}),
  \end{equation*}
  where $(\theta _{L_{d}}^-)^{C_{d}}$ and $(\theta
  _{L_{d}}^+)^{F_{L_{d}}(C_{d})}$ are the restrictions of
  $\theta_{L_d}^-$ and $\theta_{L_d}^+$ to $\Omega^2(C_d)$ and
  $\Omega^2(F_{L_d}(C_d))$, respectively. Since $(q_{0},q_{1})$ and
  $(\delta q_{0},\delta q_{1})$ are arbitrary, using~\eqref{eq:xidef}
  we obtain
  \begin{equation*}
    \xi =d\hat{S}_{d} -(F_{L_{d}}^{\ast }((\theta _{L_{d}}^+)^{F_{L_{d}}(C_{d})}) -
    (\theta_{L_{d}}^-)^{C_{d}}).
  \end{equation*}
  Therefore,
  \begin{equation*}
    \begin{split}
      d\xi =&\ d(d\hat{S}_{d} -(F_{L_{d}}^{\ast }((\theta
      _{L_{d}}^+)^{F_{L_{d}}(C_{d})}) -
      (\theta_{L_{d}}^-)^{C_{d}})) \\
      =&\ d^2\hat{S}_{d} + F_{L_{d}}^{\ast
      }(-d(\theta_{L_{d}}^+)^{F_{L_{d}}(C_{d})}) -
      (-d(\theta_{L_{d}}^-)^{C_{d}}) \\
      =&\ F_{L_{d}}^{\ast }(\Omega _{L_{d}}^{F_{L_{d}}(C_{d})}) -
      \Omega_{L_{d}}^{C_{d}}.
    \end{split}
  \end{equation*}
\end{proof}

\begin{remark}
  The flow $F_{L_{d}}$ is a symplectomorphism if $d\xi=0$. It follows
  from~\eqref{eq:xidef} that $\xi$ vanishes when $\delta q_{1} \in
  D_{V}|_{(q_{0},q_{1},F_{L_{d}}(q_{0},q_{1}))}$. This situation
  occurs, for instance, when a DSOCS comes from an unconstrained
  system, where $D_{V} |_{(q_{0},q_{1},F_{L_{d}}(q_{0},q_{1}))} =
  T_{q_{1}}Q$. It also occurs when it comes from a discrete holonomic
  system (see Remark~\ref{rem:discrete_holonomic_as_DSOCS}). Indeed if
  $(q_{0},q_{1}) \in C_{d,r}$ (see
  Remark~\ref{rem:discrete_holonomic_and_flows}) and $(\delta q_{0},
  \delta q_{1}) \in T_{(q_{0},q_{1})}C_{d,r}$, we have that $\delta
  q_{1} \in T_{q_{1}} \mathcal{N}_{r} =
  D_{V}|_{(q_{0},q_{1},F_{L_{d}}(q_{0},q_{1}))}$, so that
  $\xi(q_{0},q_{1})(\delta q_{0},\delta q_{1}) = 0$.  Hence, under
  these conditions,
  \begin{equation*}
    (F_{L_{d}}|_{C_{d,r}})^{*}\left( \Omega_{L_{d}}^{F_{L_{d}}(C_{d,r})}\right)  =
    \Omega_{L_{d}}^{C_{d,r}},    
  \end{equation*}
  so that $F_{L_{d}}|_{C_{d,r}}$ is a symplectomorphism.
\end{remark}


\section{Future work}
\label{Sect:future}

It is well known that systems with group symmetry can be reduced and
the resulting systems provide a useful way to understand the ``core''
dynamics and, in some cases, a practical way of solving their
equations of motion.  Therefore, it is a very natural continuation of
the current work to introduce a notion of DSOCS with symmetry group
and develop a reduction procedure for these systems. We intend to
tackle this problem following the approach to reduce discrete
nonholonomic systems used
in~\cite{ar:fernandez_tori_zuccalli-lagrangian_reduction_of_discrete_mechanical_systems}.

Given a numerical integrator of a continuous system, it is very
important to know how well it approximates the actual solution of the
original system. In the unconstrained case, such analysis can be
performed as follows. As a first step, an \emph{exact discrete
  Lagrangian} is defined: it has the property that its discrete
trajectories coincide with the trajectories of the original system (at
specific discrete times). Except in a few trivial cases, such exact
Lagrangians cannot be constructed explicitly, so a second step is to
construct discrete Lagrangians that approximate the exact one. Using
this approach, it is possible to give estimates of the goodness of the
numerical integrator (see~\cite{ar:marsden_west-discrete_mechanics_and_variational_integrators}, Part 2). For discrete
systems with nonholonomic constraints the same type of error analysis
was started
in~\cite{ar:deleon_martindediego_santameriamerino-geometric_integrators_and_nonholonomic_mechanics}. However,
their work still needs to be completed after the results
of~\cite{ar:patrick_cuell-eror_analysis_of_variational_integrators_of_unconstrained_lagrangian_systems}. Perhaps,
this could be done by giving an adequate extension
of~\cite{ar:patrick_cuell-eror_analysis_of_variational_integrators_of_unconstrained_lagrangian_systems}
to the nonholonomic case. Even more, we would like to extend the whole
program to the error analysis of DSOCSs.



\begin{thebibliography}{10}

\bibitem{ar:benito_deleon_martindediego-higher_order_discrete_lagrangian_mechanics}
  R. Benito, M. de~Le{\'o}n and D.~Mart{\'{\i}}n~de Diego,
  \emph{Higher-order discrete {L}agrangian mechanics}, Int. J. Geom. Methods
  Mod. Phys. \textbf{3} (2006), no.~3, 421--436. \MR{2232878 (2007e:70015)}

\bibitem{ar:benito_martindediego-hidden_simplecticity_in_hamilton_s_principle_algorithm}
  R. Benito and D. Mart{\'{\i}}n~de Diego, \emph{Hidden symplecticity
    in {H}amilton's principle algorithms}, Differential geometry and
  its applications, Matfyzpress, Prague, 2005,
  pp.~411--419. \MR{2268952 (2007k:70028)}

\bibitem{bo:bloch-nonholonomic_mechanics_and_control} A.~M. Bloch,
  \emph{Nonholonomic mechanics and control}, Interdisciplinary Applied
  Mathematics, vol.~24, Springer-Verlag, New York, 2003. \MR{1978379
    (2004e:37099)}

\bibitem{th:borda-sistemas_mecanicos_discretos_con_vinculos_de_orden_2}
  N. Borda, \emph{Sistemas mec{\'a}nicos discretos con v{\'\i}nculos
    de orden 2}, Tesis de Maestr{\'\i}a en Ciencias F{\'\i}sicas,,
  Instituto Balseiro, Bariloche, Argentina, 2011.

\bibitem{ar:campos_cendra_diaz_martin-discrete_lagrange_dalembert_poincare_equations_for_euler's_disk}
  C. Campos, H. Cendra, V. D{\'{\i}}az and D. Mart{\'{\i}}n~de Diego,
  \emph{Discrete {L}agrange-d'{A}lembert-{P}oincar\'e equations for
    {E}uler's disk}, Rev. R.  Acad. Cienc. Exactas F\'\i
  s. Nat. Ser. A Math. RACSAM \textbf{106} (2012), no.~1,
  225--234. \MR{2892145}

\bibitem{ar:cendra_grillo-generalized_nonholonomic_mechanics_servomechanisms_and_related_brackets}
  H.~Cendra and S.~Grillo, \emph{Generalized nonholonomic mechanics,
  servomechanisms and related brackets}, J. Math. Phys. \textbf{47} (2006),
  no.~2, 022902, 29. \MR{MR2208156 (2007a:70019)}

\bibitem{ar:cendra_grillo-lagrangian_systems_with_higher_order_constraints}
H.~Cendra and S.~ Grillo, \emph{Lagrangian systems with higher order
  constraints}, J. Math. Phys. \textbf{48} (2007), no.~5, 052904, 35.
  \MR{2329856 (2008e:70017)}

\bibitem{ar:cendra_ibort_de_leon_martin-a_generalization_of_chetaevs_principle_for_a_class_of_higher_order_nonholonomic_constraints}
  H.~Cendra, A.~Ibort, M.~de~Le{\'o}n and D.~Mart{\'{\i}}n~de Diego,
  \emph{A generalization of {C}hetaev's principle for a class of
    higher order nonholonomic constraints}, J. Math. Phys. \textbf{45}
  (2004), no.~7, 2785--2801. \MR{MR2067586 (2005d:70004)}

\bibitem{bo:cendra_marsden_ratiu-lagrangian_reduction_by_stages}
  H.~Cendra, J.~Marsden, and T.~Ratiu, \emph{Lagrangian reduction by
    stages}, Mem. Amer. Math. Soc. \textbf{152} (2001), no.~722,
  x+108. \MR{MR1840979 (2002c:37081)}

\bibitem{ar:chetaev-on_the_gauss_principle}
  N.~G. Chetaev, \emph{On the gauss principle}, Izv. Fiz-Mat. Obsc. Kazan Univ.
  \textbf{7} (1934), 68--71.

\bibitem{ar:cortes_martinez-non_holonomic_integrators}
  J.~Cort{\'e}s and S.~Mart{\'{\i}}nez, \emph{Non-holonomic integrators},
  Nonlinearity \textbf{14} (2001), no.~5, 1365--1392. \MR{MR1862825
  (2002h:37165)}

\bibitem{bo:cortes-non_holonomic}
  J. Cort{\'e}s, \emph{Geometric, control and numerical aspects of
  nonholonomic systems}, Lecture Notes in Mathematics, vol. 1793,
  Springer-Verlag, Berlin, 2002. \MR{MR1942617 (2003k:70013)}

\bibitem{ar:crampin_sarlet_cantrijn-higher_order_differential_equations_and_higher_order_lagrangian_mechanics}
  M.~Crampin, W.~Sarlet and F.~Cantrijn, \emph{Higher-order differential
  equations and higher-order {L}agrangian mechanics}, Math. Proc. Cambridge
  Philos. Soc. \textbf{99} (1986), no.~3, 565--587. \MR{830369 (87i:58053)}

\bibitem{ar:deleon_martindediego_santameriamerino-geometric_integrators_and_nonholonomic_mechanics}
M.~de~Le{\'o}n, D.~Mart{\'{\i}}n~de Diego and A.~Santamar{\'{\i}}a-Merino,
  \emph{Geometric integrators and nonholonomic mechanics}, J. Math. Phys.
  \textbf{45} (2004), no.~3, 1042--1064. \MR{2036181 (2004j:37157)}

\bibitem{bo:deleon_rodrigues-generalized_classical_mechanics_and_field_theory}
  M.~de~Le{\'o}n and P.~Rodrigues, \emph{Generalized classical
    mechanics and field theory}, North-Holland Mathematics Studies,
  vol. 112, North-Holland Publishing Co., Amsterdam, 1985, A
  geometrical approach of Lagrangian and Hamiltonian formalisms
  involving higher order derivatives, Notes on Pure Mathematics,
  102. \MR{808964 (87m:58059)}

\bibitem{bo:dobronravov-the_fundamentals_of_mechanics_of_nonholonomic_systems}
V.~Dobronravov, \emph{The fundamentals of the mechanics of nonholonomic
  systems}, Vysshaya Shkola, 1970.

\bibitem{ar:fernandez_tori_zuccalli-lagrangian_reduction_of_discrete_mechanical_systems}
  J.~ Fern{\'a}ndez, C.~Tori and M.~ Zuccalli, \emph{Lagrangian
    reduction of nonholonomic discrete mechanical systems},
  J. Geom. Mech.  \textbf{2} (2010), no.~1, 69--111, Also,
  \href{http://arXiv.org/abs/1004.4288}{{\tt
      arXiv:1004.4288}}. \MR{2646536}

\bibitem{ar:grillo_maciel_perez-closed_loop_and_constrained_mechanical_systems}
  S.~Grillo, F.~Maciel and D.~P{\'e}rez, \emph{Closed-loop and constrained
  mechanical systems}, Int. J. Geom. Methods Mod. Phys. \textbf{7} (2010),
  no.~5, 857--886. \MR{2720548 (2011j:70027)}

\bibitem{th:grillo-sistemas_noholonomos_generalizados} S.~ Grillo,
  \emph{Sistemas nohol\'onomos generalizados}, Tesis de Doctorado en
  Matem\'atica, Universidad Nacional del Sur, Bah{\'\i}a Blanca, 2007.

\bibitem{ar:grillo_marsden_nair-lyapunov_constraints_and_global_asymptotic_stabilization}
  S.~Grillo, J.~ Marsden, and S.~Nair, \emph{Lyapunov constraints and
    global asymptotic stabilization}, J. Geom. Mech. \textbf{3}
  (2011), no.~2, 145--196. \MR{2824611 (2012m:70030)}

\bibitem{ar:grillo-higher_order_constrained_hamiltonian_systems} S.~
  Grillo, \emph{Higher order constrained {H}amiltonian systems}, J.
  Math. Phys. \textbf{50} (2009), no.~8, 082901, 34. \MR{2554421
    (2010i:70015)}

\bibitem{bo:hairer_lubich_wanner-geometric_numerical_integration}
  E.~Hairer, C.~Lubich, and G.~Wanner, \emph{Geometric numerical
    integration}, second ed., Springer Series in Computational
  Mathematics, vol.~31, Springer-Verlag, Berlin, 2006. \MR{MR2221614
    (2006m:65006)}

\bibitem{ar:krupkova-higher_order_mechanical_systems_with_constraints}
  O.~Krupkov{\'a}, \emph{Higher-order mechanical systems with
    constraints}, J.  Math. Phys. \textbf{41} (2000), no.~8,
  5304--5324. \MR{1770957 (2001i:37097)}

\bibitem{ar:colombo_martindediego_zuccalli-higher_order_discrete_variational_problems_with_constraints}
  L.~Colombo, D.~Mart{\'\i}n de~Diego and M.~Zuccalli,
  \emph{Higher-order discrete variational problems with constraints},
  J. Math. Phys. \textbf{54} (2013).

\bibitem{ar:marle-kinematic_and_geometric_constraints_servomechanism_and_control_of_mechanical_systems}
C.-M. Marle, \emph{Kinematic and geometric constraints, servomechanism and
  control of mechanical systems}, Rend. Sem. Mat. Univ. Politec. Torino
  \textbf{54} (1996), no.~4, 353--364, Geometrical structures for physical
  theories, II (Vietri, 1996). \MR{MR1618126 (99e:70040)}

\bibitem{ar:marsden_west-discrete_mechanics_and_variational_integrators}
  J.~Marsden and M.~West, \emph{Discrete mechanics and variational
  integrators}, Acta Numer. \textbf{10} (2001), 357--514. \MR{MR2009697
  (2004h:37130)}

\bibitem{ar:mclachlan_perlmutter-integrators_for_nonholonomic_mechanical_systems}
  R.~McLachlan and M.~Perlmutter, \emph{Integrators for nonholonomic mechanical
  systems}, J. Nonlinear Sci. \textbf{16} (2006), no.~4, 283--328.
  \MR{MR2254707 (2008d:37154)}

\bibitem{bo:neimark_fufaev-dynamics_of_nonholonomic_systems}
  Yu.~Ne{\u\i}mark and N.~Fufaev, \emph{Dynamics of nonholonomic
    systems}, Translations of Mathematical Monographs, vol.~33,
  American Mathematical Society, Providence, RI, 1972.

\bibitem{ar:patrick_cuell-eror_analysis_of_variational_integrators_of_unconstrained_lagrangian_systems}
  G.~Patrick and C.~Cuell, \emph{Error analysis of variational
    integrators of unconstrained {L}agrangian systems},
  Numer. Math. \textbf{113} (2009), no.~2, 243--264. \MR{2529508
    (2010f:37146)}

\bibitem{ar:shiriaev_perram_canudasdewit-constructive_tool_for_orbital_stabilization_of_underactuated_nonlinear_systems}
  A.~Shiriaev, J.~Perram, and C.~Canudas-de Wit, \emph{Constructive
    tool for orbital stabilization of underactuated nonlinear systems:
    virtual constraints approach}, IEEE Trans. Automat. Control
  \textbf{50} (2005), no.~8, 1164--1176. \MR{2156044 (2006k:93113)}

\bibitem{ar:swaczyna-mechanical_systems_with_nonholonomic_constraints_of_the_second_order}
M.~Swaczyna, \emph{Mechanical systems with nonholonomic constraints of the
  second order}, AIP Conf. Proc., vol. 1360, 2011, pp.~164--169.

\end{thebibliography}


\def\cprime{$'$} \def\polhk#1{\setbox0=\hbox{#1}{\ooalign{\hidewidth
  \lower1.5ex\hbox{`}\hidewidth\crcr\unhbox0}}} \def\cprime{$'$}
  \def\cprime{$'$}
\providecommand{\bysame}{\leavevmode\hbox to3em{\hrulefill}\thinspace}
\providecommand{\MR}{\relax\ifhmode\unskip\space\fi MR }
\providecommand{\MRhref}[2]{%
  \href{http://www.ams.org/mathscinet-getitem?mr=#1}{#2}
}
\providecommand{\href}[2]{#2}


\end{document}